\numberwithin{equation}{section}
\newcommand{\real}{\mathbb{R}}
\newcommand{\Oun}{\mathcal{O}(1)}
\title*{On the Complexity of some Geometrical Objects}
\newcommand{\programme}{\mathtt{P}}
\newcommand{\ensprog}{\mathscr{P}}
\newcommand{\out}{\mathtt{out}}
\newcommand{\ind}{\mathscr{I}}
\newcommand{\indb}{\{0,1\}}
\newcommand{\entier}{\mathbb{N}}
\newcommand{\cantor}{\mathscr{C}}
\author{P. COLLET}
\institute{P.Collet \at Centre de Physique Th\'eorique,
CNRS UMR 7644,
Ecole Polytechnique,
F-91128 Palaiseau Cedex (France).
\email{collet@cpht.polytechnique.fr}}
\begin{document}

\maketitle

\abstract{
We recall the definition of the $\epsilon$-distortion
  complexity of a 
set defined in \cite{bcc} and the results obtained in this paper for
Cantor sets of the interval defined by iterated function systems. We 
state an analogous  definition for measures which may be more useful
when dealing with dynamical systems. We prove a new lower bound in the
case of Cantor sets of the interval defined by analytic 
iterated function systems. We also give an upper bound the
$\epsilon$-distortion complexity of invariant sets of uniformly
hyperbolic dynamical systems.}

\section{Introduction.}

It is common sense that some abjects are simple like lines, circles,
planes etc.  In Mathematics
and Physical sciences one also finds objects of different nature like
fractal sets which are undoubtedly of more complicated nature. One can
ask if it is possible to describe quantitatively this difference of
complexity, namely can one define a number measuring the complexity of say
a geometrical object. There are certainly many possible definitions.
In section \ref{sec1} we describe such a possible definition (see
\cite{bcc}).
 
A natural question is then to ask if this quantity is related to other
Mathematical quantities measuring different properties of the object
(dimension for example).

We now explain briefly some of the ideas behind our definition. In
his seminal paper on information \cite{K0}, Kolmogorov gave three
possible definition for the quantity of information contained in a
sequence of zeros and ones. They can also be
viewed as a quantitative approach to the measure of
complexity of such sequences. 
We will refer below to two  definitions of Kolmogorov
which can be briefly summarized as follows. 

In one definition
(algorithmic complexity, see for example \cite{lv}),
  a (finite) sequence is said to be
complex if there does not exist a  short computer program that
generates this object. Quantitatively, the complexity is measured by
the length of the smallest computer program that outputs this
sequence. One can also use the total number of instructions executed
by the machine to produce the sequence 
which may be much larger that the length of the program (due to loops
for example). 

In another definition of Kolmogorov (the combinatorial complexity, see
for example \cite{rsv}),
the idea is that an object is complex if it is contained in a large
set. Picking a particular object in  a large set of equivalent objects
require a large information. The opposite situation is even more
obvious, if a set contains only one object, it is easy to pickup this
object.

The definitions of Kolmogorov are for finite sequences of zeros and
ones but we want to analyze continuous objects. One needs therefore
some kind of discretization (see \cite{asarin} for similar questions). 
One can think for example of drawing the object on a computer
screen. In this case, very fine structures of the object  are somehow
irrelevant if their size is smaller than a pixel size. This leads
naturally to the ideas that  we have a fixed given precision
$\epsilon>0$, and the object will be described by a finite number of
points covering the object up a to precision $\epsilon$. More
precisely, we will impose that the  Hausdorff distance (see
\cite{munkres}) between the object and
the finite set of points used for its description is smaller than $\epsilon$.

Summarizing, we will start with a given reference frame with units,
and look for finite set of points with rational coordinates whose  
Hausdorff distance to the object is smaller than $\epsilon$. Finally
we consider the programs generating the
coordinates of these points. We finally optimize on the points
positions and the program length. The optimal program length is what
we call the distortion complexity of the object at precision $\epsilon$.

We will also propose a similar definition for the complexity of measures
which may be more adapted to the study of dynamical systems.

We refer to the book \cite{bi} which develops similar ideas with a
different goal. See also \cite{CS}, \cite{braverman}, 
\cite{braverman2}, \cite{brudno}, \cite{valentin} and references therein
for related works.

\section{Definitions and main results.}\label{sec1}

We first recall some definitions related to the Kolmogorov complexity (see
\cite{K0} and \cite{lv}). We will denote by $\ensprog$ the set of
finite sequences of zeros and ones. We will denote by $l(\programme)$ the
length of the sequence $\programme$. An element $\programme\in
\ensprog$ will be considered as a program working on a computer
(universal Turing machine). 

We consider in $\real^{n}$ a fixed orthogonal basis and a unit of
length. We will consider below the programs whose output is a finite
set of points in $\real^{n}$ described by their coordinates (these are
programs which terminate).  For a program $\programme\in \ensprog$ we
will denote by $\out(\programme)$ this finite set of points.

We also recall that the Hausdorff distance between two closed sets
$F$ and $F'$ is defined by
$$
d\big(F,F'\big)=
\max\big\{\sup_{x\in F}d(x,F')\,,\,\sup_{y\in F'}d(y,F)\big\}\;.
$$
This is a distance between closed sets (see \cite{munkres} for more
properties) which measures how the two sets differ.

We can now formulate our main definition, the idea is that given a
precision $\epsilon$, we look for the approximation of a set $F$ by
finitely many points which
requires the smallest computer program.

\begin{definition}\label{disto}
For  $\epsilon>0$, the $\epsilon$-distortion complexity
of a closed set $F$ (denoted by $\Delta_{\epsilon}(F)$)  is the number
$$
\Delta_{\epsilon}(F)=\min\big\{l(\programme)\,
\big|\,\programme\in\ensprog,\; 
d\big(F,\out(\programme)\big)\le \epsilon
\big\}
$$ 
\end{definition}

Our goal in the sequel is to understand how $\Delta_{\epsilon}(F)$
depends on $\epsilon$ for $\epsilon$ small and how it can be related
to other quantitative properties of $F$, at least for some particular
classes of sets. When there is no ambiguity on
the set $F$, we will use the notation $\Delta_{\epsilon}$ instead
of $\Delta_{\epsilon}(F)$.

There is an easy upper bound for $\epsilon$
distortion complexity of a set $F$ in terms of its  box counting
dimension $d_{F}$ (see \cite{falconer}). 
Given $\epsilon>0$, we can cover the set by at most
$\epsilon^{-d_{F}}$ balls of radius $\epsilon$. We can use the centers
of these balls to describe $F$ at precision $\epsilon$. To describe 
a point in a finite dimensional space we can give the dyadic expansion
of its coordinates. This immediately leads to a bound
\begin{equation}\label{bornetriviale}
\Delta_{\epsilon}(F)\le 
\mathcal{O}(1) \; \epsilon^{-d_{F}}\;\log\epsilon^{-1}\;.
\end{equation}
We will see later that imposing some properties on the set $F$ may
substantially lower the $\epsilon$-distortion complexity. 

In the case of dynamical systems, instead of looking at the $\epsilon$
distortion complexity of an attractor, it may be interesting to look
at the distortion complexity of a (invariant) probability measure.
Recall that for probability measures on a compact set, the
Kantorovich distance is a metric for the weak topology. We recall
that (see \cite{rachev}) it is defined by
$$
d_{K}\big(\mu,\nu\big)=
\inf_{f\in\mathscr{L}_{1}}\left(\int f\;d\mu- \int f\;d\nu\right)\;,
$$
where $\mathscr{L}_{1}$ is the set of Lipschitz continuous functions
with Lipschitz constant at most one.

We can now consider programs whose output 
are atomic measures with a finite number of atoms and rational
coefficients. In analogy with definition \ref{disto} we can define the 
$\epsilon$-distortion complexity of a measure.

\begin{definition}
For $\epsilon>0$,  the $\epsilon$-distortion complexity
of a measure $\mu$ (denoted by $\Delta_{\epsilon}(\mu)$)  is the number
$$
\Delta_{\epsilon}(\mu)=\min\big\{l(\programme)\,
\big|\,\programme\in\ensprog,\; 
d_{K}\big(\mu,\out(\programme)\big)\le \epsilon
\big\}
$$ 
\end{definition}

It is easy to verify that the $\epsilon$-distortion complexity of the
Lebesgue measure is bounded above by
$\mathcal{O}(1)\log\epsilon^{-1}$. Since a better precision does not
hurt, one can get a better bound by using a precision $1/n<\epsilon$
if the number $n$ has a lower complexity. 

Before we state our results, we recall the definition of Cantor sets
in the interval $[0,1]$ associated to iterated function systems. 

Let $\ind$ be a finite set of indices with at
least two elements. An (hyperbolic) \emph{Iterated Function System}
is a collection 
$$
\big\{\phi_i : A \to A  : i\in \ind\big\}
$$
of contractions on $A$ with uniform contraction rate (Lipschitz constant)
$\rho \in (0,1)$, and such that $\phi_i(A) \cap \phi_j(A) =
\emptyset$ for $i\not= j$. We shall only consider hyperbolic
iterated function systems with injective contractions (IHIFS for short).

If $\omega_{0}^{\infty}=(\omega_{0},\omega_{1},\ldots)$ 
is an infinite sequence of indices (an element of
$\ind^{\entier}$), the set 
$$
\cantor:= \bigcup_{\omega_{0}^{\infty} \in \ind^\entier}\
\bigcap_{n=0}^\infty \ \phi_{\omega_{n}}\cdots\phi_{\omega_{0}}([0,1])
$$
is a Cantor set and satisfies
\begin{equation} 
\cantor = \bigcup_{i\in \ind} \ \phi_i(\cantor).
\end{equation}
It contains all the accumulation points of the images of all the finite
composition products of $\phi_{i}$'s. We will use sometimes the
notation $\mathscr{C}(\phi)$ to emphasize the collection of maps used
to construct the Cantor set.

For $q\ge p$, we will denote by $\phi_{\omega_{p}^{q}}$ the map
$$
\phi_{\omega_{p}^{q}}=\phi_{\omega_{q}}\circ\cdots\circ\phi_{\omega_{p}}\;.
$$
If $p>q$ we define $\phi_{\omega_{p}^{q}}$ to be the identity.

The original Cantor set is obtained  by using $\ind=\{0,1\}$,
$\phi_{0}(x)=x/3$ and $\phi_{1}(x)=(2+x)/3$. It is easy to verify that
the $\epsilon$-distortion complexity of this Cantor set grows at most 
like $\log \epsilon^{-1}$. It was observed by
A.Mandel  that the  $\epsilon$-distortion complexity of this Cantor
set grows as a
function of $\epsilon^{-1}$ more slowly than any computable function.

We now state some of the results obtained in \cite{bcc} in the case of
Cantor sets. 

\begin{theorem} \label{thm:pol}
Let $\cantor$ be a Cantor set generated by an IHIFS with polynomial
functions. Then
$$
\Delta_{\epsilon}(\cantor) \le \mathcal{O}\big(\log(\epsilon^{-1}))\;.
$$
Moreover,  there exist (many) polynomial IHIFS's (with
$\mathrm{Card}\;\ind=2$) 
such that the generated Cantor set satisfies
$$
\Delta_{\epsilon}(\cantor)\ge \delta \big(\log(\epsilon^{-1}))\;,
$$
for some $\delta>0$
\end{theorem}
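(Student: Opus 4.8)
The upper bound is the easier half. For a polynomial IHIFS, each map $\phi_i$ is described by finitely many coefficients, and I would exploit the self-similar structure $\cantor=\bigcup_i\phi_i(\cantor)$ to approximate the whole Cantor set from a single short program. The plan is to write a program that stores (dyadic approximations of) the coefficients of the finitely many polynomials $\phi_i$, together with a starting point (say $0$), and then iterates the maps up to depth $n$. After $n$ levels of composition the images have diameter at most $\rho^n$, so choosing $n\simeq\log\epsilon^{-1}/\log\rho^{-1}$ produces a finite set of points whose Hausdorff distance to $\cantor$ is $\le\epsilon$. The key point is that the program length does NOT grow with the number of output points: the loop over compositions is a fixed block of code, and only the iteration depth $n$ must be written into the program, costing $\mathcal{O}(\log n)=\mathcal{O}(\log\log\epsilon^{-1})$ bits. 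The only genuine issue is arithmetic precision: since each $\phi_i$ is contracting, roundoff errors are damped rather than amplified under composition, so it suffices to carry $\mathcal{O}(n)=\mathcal{O}(\log\epsilon^{-1})$ bits of precision, and this accounts for the $\log\epsilon^{-1}$ in the bound. Care is needed to verify that the accumulated errors stay controlled and that the output set is genuinely $\epsilon$-close on both sides of the Hausdorff distance (covering and being covered), but these are routine contraction estimates.

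**Lower bound.**

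The lower bound is where the real content lies, and it must be an existence statement: I would produce a family of polynomial IHIFS's whose generated Cantor sets cannot be approximated by short programs. The strategy is a counting (or incompressibility) argument in the spirit of Kolmogorov's combinatorial complexity. First I would set up a large, suitably separated family of IHIFS's; for instance, fix $\mathrm{Card}\,\ind=2$ and let the contractions depend on a real parameter (encoded, say, in the coefficients of $\phi_1$) ranging over a Cantor-like set of admissible values, so that distinct parameters yield Cantor sets that are pairwise at Hausdorff distance bounded below in terms of how far apart the parameters are. The heart of the argument is a \emph{sensitivity} estimate: I must show that a change of size $\eta$ in the parameter forces a change of order $\eta$ (up to constants) in the resulting Cantor set, measured in Hausdorff distance. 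This is plausible because the first-level pieces $\phi_i([0,1])$ already move by order $\eta$, and the contraction rate $\rho$ keeps the deeper structure from swamping this leading effect.

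Granting such a sensitivity estimate, the counting argument runs as follows. If $\Delta_\epsilon(\cantor)\le L$ for every Cantor set in the family, then every such set is $\epsilon$-approximated by some program of length $\le L$; but there are at most $2^{L+1}$ programs of length $\le L$, hence at most $2^{L+1}$ possible outputs, each of which can be $\epsilon$-close to only those Cantor sets lying within Hausdorff distance $2\epsilon$ of one another. By the sensitivity estimate, two Cantor sets within Hausdorff distance $2\epsilon$ come from parameters within $\mathcal{O}(\epsilon)$, so a single program can serve only parameters in an interval of length $\mathcal{O}(\epsilon)$. If the parameter family is chosen to be $\mathcal{O}(\epsilon)$-separated and to contain at least $c\,\epsilon^{-\alpha}$ (or even just a super-polynomial-in-$L$) many well-separated values, then $2^{L+1}$ programs cannot cover them all unless $L\ge\delta\log\epsilon^{-1}$ for some $\delta>0$. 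Choosing a single parameter that remains uncoverable at infinitely many scales $\epsilon$ (a diagonalization over the scales) yields one Cantor set in the family with $\Delta_\epsilon(\cantor)\ge\delta\log\epsilon^{-1}$, and the "many'' in the statement follows because the construction leaves a positive-measure (or at least uncountable) set of admissible parameters.

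**Main obstacle.**

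The step I expect to be hardest is the sensitivity estimate, namely controlling the Hausdorff distance between $\mathscr{C}(\phi)$ and $\mathscr{C}(\phi')$ from below in terms of the coefficient distance of the two IHIFS's. Upper bounds of this type follow routinely from contraction and a telescoping argument, but the \emph{lower} bound requires ruling out accidental cancellations in which a perturbation of the maps leaves the limit set nearly invariant. I would handle this by isolating a single well-localized piece $\phi_{\omega_0^q}([0,1])$ of the Cantor set whose position depends monotonically and non-degenerately on the parameter, and arguing that this piece cannot be matched by any piece of the perturbed set at the same scale. Making this robust across all scales $\epsilon$ simultaneously — so that the same parameter witnesses incompressibility at infinitely many $\epsilon$ — is the delicate part, and is presumably where the restriction to specially chosen (rather than all) polynomial IHIFS's enters.
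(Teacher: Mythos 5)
Your proposal is correct and takes essentially the same route as the paper's (the proof of this theorem is deferred to \cite{bcc}, and the analogous analytic lower bound is carried out in Section~\ref{secana}): the upper bound by storing coefficients truncated to $\mathcal{O}(\log\epsilon^{-1})$ bits and running a fixed iteration loop, the lower bound by counting the at most $2^{L+1}$ programs of length $\le L$ against a one-parameter family of IHIFS's whose Cantor sets are pairwise Hausdorff-separated, with ``many'' receiving exactly the probabilistic meaning you indicate. Two loose ends are tightened by the paper's machinery: the sensitivity estimate you single out as the main obstacle is precisely Lemma~\ref{infhaus} --- the endpoints $\phi_{0}(1)$ and $\phi_{1}(0)$ of the first-level gap belong to the Cantor set, so displacing either by more than $\epsilon$ while both gaps have length $>2\epsilon$ forces Hausdorff distance $>\epsilon$, with no monotonicity or matching of pieces needed --- and your ``uncoverable at infinitely many scales'' must be strengthened to ``at every scale along a geometric sequence'' (e.g.\ by Borel--Cantelli applied to the bad parameter sets, whose measure at scale $\epsilon$ is $\mathcal{O}(\epsilon^{1-\delta})$, or by the nested recursion used in the proof of Theorem~\ref{laborneinf}), since only then does monotonicity of $\epsilon\mapsto\Delta_{\epsilon}$ interpolate the bound to all small $\epsilon$ with a bounded loss in the constant $\delta$.
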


\begin{theorem} \label{thm:analytic}
Let $\cantor$ be a Cantor set generated by an IHIFS with real analytic
functions. Then
$$
\Delta_{\epsilon}(\cantor)\leq
\mathcal{O}\big(\big(\log(\epsilon^{-1})\big)^2\big)
$$
\end{theorem}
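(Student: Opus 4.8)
The plan is to exploit the self-similar structure $\cantor=\bigcup_{i\in\ind}\phi_i(\cantor)$ so that, instead of listing $\epsilon$-dense points one by one, I write a short program that \emph{generates} them by iterating (approximations of) the maps $\phi_i$. First I fix a depth $n$ with $\rho^{\,n+1}\le\epsilon/3$; since $\phi_{\omega_0^{n}}$ contracts $[0,1]$ to a set of diameter at most $\rho^{\,n+1}$, choosing one reference point $x_0\in[0,1]$ and forming the $\mathrm{Card}(\ind)^{\,n+1}$ points $\phi_{\omega_0^{n}}(x_0)$, one per word $\omega_0\cdots\omega_n$, already yields a set whose Hausdorff distance to $\cantor$ is at most $\rho^{\,n+1}$. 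The choice $n=\mathcal{O}(\log\epsilon^{-1})$ suffices, and the (exponential) number of points is irrelevant: what matters for $\Delta_\epsilon$ is the \emph{length} of the program, and nested loops over $\ind$ of depth $n$ cost only $\mathcal{O}(\log n)=\mathcal{O}(\log\log\epsilon^{-1})$ in the code.

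The only obstacle is that an analytic $\phi_i$ cannot be stored exactly. I replace each $\phi_i$ by a polynomial $P_i$ of degree $D$ (a Chebyshev or Taylor partial sum); real analyticity provides a holomorphic extension to a complex neighborhood of $A$ and hence a geometric approximation rate $\|\phi_i-P_i\|_\infty\le C\theta^{D}$ with $\theta\in(0,1)$ and $C$ uniform over the finitely many maps. Thus a single map is approximated to within $\eta$ as soon as $D=\mathcal{O}(\log\eta^{-1})$. The crucial point is that errors do not accumulate badly under composition: writing $\tilde\phi_i=P_i$ (further rounded to finite precision) and comparing the true orbit $y_{k+1}=\phi_{\omega_k}(y_k)$ with the computed orbit $\tilde y_{k+1}=\tilde\phi_{\omega_k}(\tilde y_k)$, $\tilde y_0=y_0=x_0$, the Lipschitz bound $\rho$ of the \emph{true} maps gives
$$
|y_{k+1}-\tilde y_{k+1}|\le \rho\,|y_k-\tilde y_k|+\eta ,
$$
so the total error after $n$ steps is at most $\eta/(1-\rho)$, independently of $n$. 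Taking $\eta=\mathcal{O}(\epsilon)$ keeps the cumulative distortion below $\epsilon/3$, and combined with the diameter bound the computed points lie at Hausdorff distance at most $\epsilon$ from $\cantor$ in both directions (every point of $\cantor$ sits in a cylinder close to a computed point, and every computed point is close to the image of a point of $\cantor$).

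It remains to count. The degree needed is $D=\mathcal{O}(\log\epsilon^{-1})$. Storing one coefficient to enough precision that finite-precision Horner evaluation and the $n$-fold composition stay within the error budget requires $b=\mathcal{O}(\log\epsilon^{-1})$ bits; the contraction again prevents the per-step rounding errors from blowing up, so the working precision is $\mathcal{O}(\log\epsilon^{-1})$ and contributes only a lower-order parameter to the code. The program thus consists of the fixed code for Horner evaluation, composition and output of the dyadic points, the loop control of size $\mathcal{O}(\log\log\epsilon^{-1})$, and the dominant item, the $\mathrm{Card}(\ind)\,(D+1)$ stored coefficients on $b$ bits each, for a total
$$
\mathcal{O}(1)\,(D+1)\,b=\mathcal{O}\big((\log\epsilon^{-1})^2\big).
$$

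This yields $\Delta_\epsilon(\cantor)\le\mathcal{O}\big((\log\epsilon^{-1})^2\big)$. The square, as opposed to the single logarithm of Theorem \ref{thm:pol}, comes precisely from the product of a logarithmic degree with a logarithmic per-coefficient precision; for polynomial systems the degree is a fixed constant and only the $\log\epsilon^{-1}$ from precision survives. The main technical care goes into the quantitative polynomial approximation with a uniform geometric rate and the bookkeeping of the finite-precision composition; the error propagation itself is tamed for free by the hyperbolicity $\rho<1$.
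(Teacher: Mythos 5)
Your proposal is correct and takes essentially the same route as the paper (whose detailed proof is in \cite{bcc} and whose mechanism is reproduced in Section~\ref{secana}): truncate the analytic maps to polynomials of degree $\mathcal{O}(\log\epsilon^{-1})$, store the coefficients on $\mathcal{O}(\log\epsilon^{-1})$ bits each, and use the contraction $\rho<1$ to keep the error of the $n$-fold compositions of order $\epsilon$ --- your error-propagation inequality is precisely the content of Lemma~\ref{bornecomp}. The count (logarithmic degree times logarithmic per-coefficient precision, with the loop structure contributing only lower-order terms) matches the paper's and yields the $\big(\log\epsilon^{-1}\big)^{2}$ bound.
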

We will establish a  converse result in section \ref{secana}.

\begin{theorem}\label{thm:ckupper}
Let $k> 1$. For any $\delta>0$, for any $C^k$ Cantor set $\cantor$
generated by an IHIFS with
box counting dimension $D$, we have
$$
\Delta_{\epsilon}(\cantor)\leq \mathcal{O}\big(\epsilon^{-\frac D k -\delta}\big)
$$
Moreover, for any $\delta>0$, there exist (many) $C^{k}$ 
Cantor sets $\cantor$ generated by an IHIFS (with $\mathrm{Card}\;\ind=2$), 
with box counting dimension at most $D+\delta$, such that for any
$\epsilon>0$ small enough,
$$
\epsilon^{-\frac D k+\delta}
\leq \Delta_{\epsilon}(\cantor).
$$
\end{theorem}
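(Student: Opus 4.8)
The plan is to exploit the $C^k$ smoothness to replace the exact maps by cheap local polynomial surrogates, and the low dimension of $\cantor$ to limit how many surrogates are needed. First I would fix $n$ with $\rho^{n+1}\le\epsilon$, so that picking one representative in each of the $(\mathrm{Card}\,\ind)^{n+1}$ cylinders $\phi_{\omega_{0}}\circ\cdots\circ\phi_{\omega_{n}}([0,1])$ already gives a set at Hausdorff distance $\le\epsilon$ from $\cantor$. The point is not to store these points (that is the trivial bound \eqref{bornetriviale}) but to let the program recompute them from approximate maps $\tilde\phi_{i}$. The first ingredient is the elementary telescoping estimate: if each $\tilde\phi_{i}$ is $\rho$-Lipschitz and $\|\tilde\phi_{i}-\phi_{i}\|_{\infty}\le\eta$ on a neighbourhood of $\cantor$, then any composition satisfies $\|\tilde\phi_{\omega_{0}}\circ\cdots\circ\tilde\phi_{\omega_{n}}-\phi_{\omega_{0}}\circ\cdots\circ\phi_{\omega_{n}}\|_{\infty}\le\eta/(1-\rho)$, so it suffices to take $\eta$ of order $\epsilon$. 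The second ingredient produces such $\tilde\phi_{i}$ cheaply: cover $\cantor$ by the $N(\delta)$ intervals of a $\delta$-grid with $\delta=\epsilon^{1/k}$ and, on each, let $\tilde\phi_{i}$ be the degree $k-1$ Taylor polynomial of $\phi_{i}$; since $\phi_{i}\in C^{k}$ the error on an interval of length $\delta$ is $\mathcal{O}(\delta^{k})=\mathcal{O}(\epsilon)$. By definition of the box counting dimension, for every $\delta'>0$ one has $N(\delta)\le\delta^{-D-\delta'}$ for $\delta$ small, i.e. at most $\epsilon^{-(D+\delta')/k}$ intervals. Storing, for each occupied interval, its location and the $k$ coefficients to precision $\epsilon$ costs $\mathcal{O}(\log\epsilon^{-1})$ bits, and the program that reads this table, runs the recursion from the fixed points and prints the representatives has this table as its only $\epsilon$-dependent part. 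Hence $\Delta_{\epsilon}(\cantor)\le\mathcal{O}\big(\epsilon^{-(D+\delta')/k}\log\epsilon^{-1}\big)$, which is $\mathcal{O}(\epsilon^{-D/k-\delta})$ after absorbing the logarithm and relabelling $\delta$.

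\textbf{Lower bound: the construction.} For the converse I would build an explicit family by perturbing the affine system $\psi_{0}(x)=rx$, $\psi_{1}(x)=rx+1-r$ with $r<1/2$ chosen so that $\log 2/\log r^{-1}=D$. The guiding principle is that near $\cantor$ a $C^{k}$ bump of width $\delta$ can have height of order $\delta^{k}$ while keeping its $k$-th derivative bounded, so such a bump displaces the corresponding piece of the Cantor set by an amount of order $\delta^{k}$, detectable at precision $\delta^{k}$. Fix an incompressible binary sequence $\xi$ (this is where \emph{many} enters: the statement will hold for every such $\xi$, a set of full measure). At each dyadic scale $\delta_{j}=2^{-j}$ select a maximal $\delta_{j}$-separated family of level-$j$ cylinder intervals $I_{j,m}$ meeting $\cantor$; on each put a fixed $C^{k}$ bump of width $\delta_{j}$ and height $2^{-\beta j}$ weighted by the next bit $\xi_{j,m}$ of $\xi$, and set $\phi_{i}=\psi_{i}+\theta\,g$ with $g$ the sum of all these bumps and $\theta$ small. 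The one point to check is that $g\in C^{k}$: the $k$-th derivative of the scale-$j$ bumps has size $2^{-\beta j}/\delta_{j}^{k}=2^{(k-\beta)j}$, so choosing $\beta=k+\delta''$ with $\delta''>0$ makes these contributions summable and $g$ genuinely $C^{k}$ with small norm, whence $\phi_{0},\phi_{1}$ is again an IHIFS. A standard bounded-distortion argument keeps the box counting dimension within $\delta$ of $D$.

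\textbf{Lower bound: the counting argument.} The heart of the proof is to read the bits back from any $\epsilon$-approximation. Given $\programme\in\ensprog$ with $d(\cantor,\out(\programme))\le\epsilon$, I would run a fixed decoder that knows $\psi_{0},\psi_{1}$, $r$, $\theta$ and the combinatorics and that receives only $\epsilon$ (hence $\mathcal{O}(\log\epsilon^{-1})$ extra bits). Setting $J$ by $2^{-\beta J}\sim\epsilon$, the decoder recovers $\xi_{j,m}$ for all $j\le J$ by locating the relevant cluster of $\out(\programme)$ and measuring its displacement from the unperturbed position; because the heights decrease geometrically this is done scale by scale from coarse to fine, subtracting the already-known coarser displacements, so that at each step the bit to be read dominates all remaining corrections. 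This recovers the first $N\sim\sum_{j\le J}\delta_{j}^{-D}\sim\epsilon^{-D/(k+\delta'')}$ bits of $\xi$, and incompressibility gives $l(\programme)+\mathcal{O}(\log\epsilon^{-1})\ge N-\mathcal{O}(1)$; since $N$ is a negative power of $\epsilon$ it dominates the logarithm, and as $\delta''\downarrow0$ one obtains $\Delta_{\epsilon}(\cantor)\ge\epsilon^{-D/k+\delta}$ for $\epsilon$ small. The main obstacle is exactly this decoding step: one must guarantee that a single bump perturbs an \emph{identifiable} and essentially \emph{isolated} feature of $\cantor$ by a controlled amount, so that the bits are independently and algorithmically recoverable from geometry alone; the multiscale $C^{k}$ bookkeeping, balancing summability of the norms against detectability of the heights, with the exponent $+\delta$ supplying the needed slack, is what makes these two requirements compatible.
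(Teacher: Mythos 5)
First, for context: the paper does not actually prove Theorem \ref{thm:ckupper}; it quotes it from \cite{bcc}, notes that the proof there rests on the \emph{scaling function} for fractal sets and that ``many'' has a probabilistic meaning, and even lists as an open question whether a proof by direct perturbation/interpolation --- the route you attempt --- is possible. Your upper bound half is sound and is essentially the expected argument: piecewise Taylor surrogates of degree $k-1$ at scale $\epsilon^{1/k}$, stored only on the at most $\epsilon^{-(D+\delta')/k}$ grid intervals meeting $\cantor$, combined with the telescoping stability of compositions (the same mechanism as Lemma \ref{bornecomp}); this yields $\Oun\,\epsilon^{-(D+\delta')/k}\log\epsilon^{-1}$ as claimed.

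The lower bound, however, has a genuine gap, and it sits exactly where you flag ``the main obstacle''. You place the scale-$\delta_j$ bumps on cylinder intervals of the \emph{unperturbed} affine system, but the perturbed Cantor set $\cantor(\phi)$ is displaced from $\cantor(\psi)$ by an amount of order $\theta\sum_j\delta_j^{\beta}/(1-\rho)$, a fixed constant dominated by the \emph{coarsest} bumps. Hence at every scale finer than a $\theta$-dependent threshold --- that is, for all but $\mathcal{O}(1)$ of the $N\sim\epsilon^{-D/\beta}$ bits you intend to read --- the bump width $\delta_j$ is much smaller than this misalignment, so a fine bump may lie entirely inside a gap of $\cantor(\phi)$ (in which case the bit has no effect whatsoever on the set and no decoder can recover it), or may act on a piece with a different address than intended; incompressibility then yields nothing. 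A second, related defect: a bump supported in $I_{j,m}$ vanishes at the endpoints of $I_{j,m}$, which are precisely the features one can read off an $\epsilon$-approximation via Lemma \ref{infhaus}, so the displacement you want to measure is not carried by an identifiable, gap-isolated feature. Both defects can be repaired, but only by changing the construction: use plateau-shaped perturbations, constant on a neighbourhood of a cylinder and decaying inside the two adjacent gaps, and place the scale-$j$ perturbations \emph{recursively} on the cylinders of the system already perturbed at coarser scales (the finer perturbations added afterwards move every cylinder by only $\mathcal{O}(\theta\delta_j^{\beta})\ll\delta_j$, so the alignment is self-consistent), skipping enough intermediate scales that the unknown finer displacements are a small fraction of the bit height $\theta\delta_j^{\beta}$. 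This intrinsic, scale-by-scale prescription of the geometry is exactly what the scaling-function formalism of \cite{bcc} provides automatically, which is why the original proof is organized around it rather than around perturbations of the maps supported at fixed locations; your exponent bookkeeping ($\beta=k+\delta''$, $N\sim\epsilon^{-D/(k+\delta'')}$, box counting dimension within $\mathcal{O}(\theta)$ of $D$) is otherwise consistent with that argument.
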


The word ``many'' in the second parts of Theorems \ref{thm:pol} and
\ref{thm:ckupper} is given a precise (probabilistic) meaning in
\cite{bcc}. We will show in section \ref{secana} an analogous result
in the analytic case where this word means large cardinality (the
combinatorial complexity in the sense of Kolmogorov's paper \cite{K0}). 

Here we want to emphasis the difference of behavior between 
Cantor sets defined by analytic IHIFS and Cantor sets defined by
differentiable  IHIFS as appearing in Theorems \ref{thm:analytic} and 
\ref{thm:ckupper} respectively. In the first case the upper bound
grows rather  slowly while in the second case, the growth is much
faster and depends crucially on two properties of the system, the box
counting dimension of the set and the regularity of the IHIFS.

These results (and some others) are proved in details in \cite{bcc}. 
In the next sections we will explain some of the ideas behind these
proofs applied to two new results. In section \ref{secana} we will
prove a lower bound in the analytic case, and in section \ref{secdyn}
we will prove an upper bound for the $\epsilon$-distortion complexity
of the attractor of a uniformly hyperbolic dynamical system.

\section{The $\epsilon$-distortion complexity of real
analytic IHIFS.}\label{secana}

In this section we will obtain some results 
for Cantor sets of the interval defined by analytic IHIFS. The results
are formulated in terms of 
the so called combinatorial complexity (see
\cite{K0} and \cite{kt}), namely we will ``count'' the number of
Cantor sets with a given property. The relation with Kolmogorov
complexity is that since the number of programs of length $p$ is at
most $2^{p}$, if you have a set with larger cardinality, some  of its
elements should have complexity larger than $p$.

We start by introducing  some definitions and
notations.  
We choose once for all a  number $R>1/2$ and for any $0<\tilde
\rho<\rho<1/2$, $Q>1+R/2$,   we denote by $\mathcal{H}_{R}(\tilde\rho,\rho,Q)$ the
set of functions $f$ satisfying
\begin{enumerate}[i)]
\item $f$ is analytic and has modulus 
bounded by $Q$
 in the complex disk $D_{R}$ centered at $z=1/2$ and with radius
$R$.  
\item $f$ maps the interval $[0,1]$ into itself, 
and satisfies
$$
\tilde\rho\le \inf_{x\in[0,1]}f'(x)\le \sup_{x\in[0,1]}f'(x)\le \rho\;.
$$
\end{enumerate}
Note that this set is non empty since it contains the function
$f(x)=(\rho+\tilde\rho)(x-1/2)/2+1/2$. 
We will use the indices $0$ and $1$ to specify the components of the
elements of $\mathcal{H}_{R}(\tilde\rho,\rho,Q)^{2}$. We will denote by 
$\mathcal{K}_{R}(\tilde\rho,\rho,Q)$ the subset of
$\mathcal{H}_{R}(\tilde\rho,\rho,Q)^{2}$ given by
$$
\mathcal{K}_{R}(\tilde\rho,\rho,Q)=\big\{\phi=(\phi_{0},\phi_{1})
\in \mathcal{H}_{R}(\tilde\rho,\rho,Q)^{2}\,,\, \phi_{0}(0)=0,\; 
\phi_{1}(1)=1\big\}\;.
$$
Given two Cantor sets $\mathcal{C}$ and $\mathcal{C}'$, we will say
that they are $\epsilon$-separated if 
$$
d\big(\mathcal{C},\mathcal{C}'\big)>\epsilon\;.
$$
For $\epsilon>0$, we will denote by
$\mathscr{N}(\epsilon,\tilde\rho,\rho,Q,R)$ the 
maximal number of pairwise $\epsilon$-separated Cantor sets 
defined by analytic IHIFS with
$\mathrm{Card}\;\ind=2$ and $\phi\in \mathcal{K}_{R}(\tilde\rho,\rho,Q)$.
This is the $\epsilon/2$ capacity as defined in \cite{kt}.

Before we give the main result of this section 
which is new with
respect to \cite{bcc}, and complements Theorem \ref{thm:analytic}, we
prove some technical lemmas.

\begin{lemma}\label{bornecomp} 
For any $\phi^{0}\in \mathcal{K}_{R}(\tilde\rho,\rho,Q)$, 
  if $g_{0}$
and $g_{1}$ are analytic and bounded in $D_{R}$ and 
satisfy $g_{0}(0)=g_{1}(1)=0$, and
$$
\sup_{x\in[0,1]}|g_{0}'(x)|+\sup_{x\in[0,1]}|g_{1}'(x)|
<\inf\big\{\tilde\rho,1/2-\rho\big\}\,
$$
then $\phi=\big(\phi_{0}^{0}+g_{0},\phi_{1}^{0}+g_{1}\big)$ satisfies
$$
\sup_{x\in[0,1]}\sup_{\omega_{0}^{n}\in\indb^{n+1}}\big|\phi_{\omega_{0}^{n}}(x)
-\phi^{0}_{\omega_{0}^{n}}(x)\big|\le \frac{1}{1-\rho}\max\big\{
\sup_{x\in[0,1]}\big|g_{0}(x)\big|\,,\, 
\sup_{x\in[0,1]}\big|g_{1}(x)\big|\big\}
$$
\end{lemma}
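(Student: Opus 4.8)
The plan is to prove the estimate by a telescoping (hybrid) argument in which the two factors $\phi^0_i$ and $\phi_i=\phi^0_i+g_i$ are interchanged one composition factor at a time, \emph{keeping the unperturbed maps on the outside}. This ordering is the crux: the unperturbed factors are contractions of $[0,1]$ with ratio at most $\rho$, which is exactly what produces the constant $1/(1-\rho)$, whereas the perturbed factors only have ratio bounded by $\rho+\kappa$, with $\kappa:=\sup_{x\in[0,1]}|g_0'(x)|+\sup_{x\in[0,1]}|g_1'(x)|$; putting the $\phi^0_i$ outside avoids paying this larger rate in the propagation.

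First I would record the elementary properties of the perturbed maps. On $[0,1]$ one has $\phi_i'=(\phi^0_i)'+g_i'$ with $(\phi^0_i)'\in[\tilde\rho,\rho]$ and $|g_i'|\le\kappa$, so
$$\tilde\rho-\kappa\le \phi_i'(x)\le \rho+\kappa\qquad (x\in[0,1]).$$
Since $\kappa<\tilde\rho$ the map $\phi_i$ is strictly increasing, and since $\kappa<1/2-\rho$ its slope is strictly below $1/2$. Combined with the normalizations $\phi_0(0)=\phi^0_0(0)+g_0(0)=0$ and $\phi_1(1)=\phi^0_1(1)+g_1(1)=1$, this forces $\phi_0([0,1])\subseteq[0,1/2)$ and $\phi_1([0,1])\subseteq(1/2,1]$; in particular both $\phi_0$ and $\phi_1$ map $[0,1]$ into itself. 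Consequently every intermediate point $z_j:=\phi_{\omega_0^{j-1}}(x)$ (with $z_0=x$) lies in $[0,1]$, the region where the hypotheses on $g_i$ and on the derivatives are available.

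Next I would set up the telescoping. For $0\le j\le n+1$ put $H_j=\phi^0_{\omega_j^n}\circ\phi_{\omega_0^{j-1}}$, the composition whose innermost $j$ factors are perturbed and whose remaining factors are unperturbed, so that the conventions of the excerpt give $H_0=\phi^0_{\omega_0^n}$ and $H_{n+1}=\phi_{\omega_0^n}$. Then
$$\phi_{\omega_0^n}(x)-\phi^0_{\omega_0^n}(x)=\sum_{j=0}^{n}\big(H_{j+1}(x)-H_j(x)\big),$$
and the $j$-th summand equals $\phi^0_{\omega_{j+1}^n}\big(\phi_{\omega_j}(z_j)\big)-\phi^0_{\omega_{j+1}^n}\big(\phi^0_{\omega_j}(z_j)\big)$, where the two inner arguments $\phi_{\omega_j}(z_j)$ and $\phi^0_{\omega_j}(z_j)$ both lie in $[0,1]$ by the previous step.

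To finish I would bound each term. The outer map $\phi^0_{\omega_{j+1}^n}=\phi^0_{\omega_n}\circ\cdots\circ\phi^0_{\omega_{j+1}}$ is a composition of $n-j$ unperturbed maps, each $\rho$-Lipschitz on $[0,1]$ and preserving $[0,1]$, hence $\rho^{n-j}$-Lipschitz on $[0,1]$. Since $\phi_{\omega_j}(z_j)-\phi^0_{\omega_j}(z_j)=g_{\omega_j}(z_j)$ with $z_j\in[0,1]$, the inner discrepancy is at most $M:=\max\{\sup_{x\in[0,1]}|g_0(x)|,\sup_{x\in[0,1]}|g_1(x)|\}$. Therefore $|H_{j+1}(x)-H_j(x)|\le \rho^{n-j}M$, and summing the geometric series yields
$$\big|\phi_{\omega_0^n}(x)-\phi^0_{\omega_0^n}(x)\big|\le M\sum_{j=0}^{n}\rho^{n-j}\le \frac{M}{1-\rho}.$$
As this bound is uniform in $x\in[0,1]$, in $\omega_0^n\in\indb^{n+1}$ and in $n$, it gives the claim. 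The only genuinely delicate point is the one flagged at the outset: the swaps must leave the $\phi^0_i$ on the outside, since otherwise the propagation rate would be $\rho+\kappa$ and the constant would deteriorate; the auxiliary fact that the perturbed maps preserve $[0,1]$ is precisely what legitimizes this ordering by keeping all evaluation points in the domain where the hypotheses hold.
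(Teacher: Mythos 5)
Your proof is correct and is essentially the argument the paper intends: the paper's one-line proof (``the proof is recursive'') amounts to the recursion $D_n\le M+\rho\,D_{n-1}$ (where $D_n$ is the supremum of the discrepancy at depth $n$ and $M$ the sup of the $|g_i|$) obtained by swapping the outermost factor first, and your telescoping sum with the unperturbed maps kept outside is exactly the unrolled form of that recursion, yielding the same geometric series $M\sum_j\rho^j\le M/(1-\rho)$. Your write-up additionally makes explicit the point the paper leaves implicit --- that the perturbed maps still send $[0,1]$ into itself, so all intermediate evaluation points remain where the hypotheses on $g_i$ and on the derivatives hold --- which is a genuine improvement in completeness but not a different method.
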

\begin{proof} 
The proof is recursive. \qed
\end{proof}

For any integer $n$, we will denote by $\mathscr{C}_{n}(\phi)$ the set
$$
\mathscr{C}_{n}(\phi)=\bigcup_{\omega_{0}^{n-1}\in\indb^{n}}\phi_{\omega_{0}^{n-1}}(0)\;
\bigcup\; \bigcup_{\omega_{0}^{n-1}\in\indb^{n}}\phi_{\omega_{0}^{n-1}}(1)
$$
Note that this set has cardinality $2^{n}+2$.

\begin{lemma}\label{difference}
For any integer $n\ge2$ we have
$$
\mathscr{C}_{n}(\phi)\backslash \mathscr{C}_{n-1}(\phi)=
\bigcup_{\omega_{0}^{n-2}\in\indb^{n-1}}\phi_{\omega_{0}^{n-2}1}(0)\;
\bigcup\; \bigcup_{\omega_{0}^{n-2}\in\indb^{n-1}}\phi_{\omega_{0}^{n-2}0}(1)
$$
\end{lemma}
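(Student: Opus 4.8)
The plan is to establish the two inclusions separately, using throughout the factorization $\phi_{\omega_0^{n-2}i}=\phi_{\omega_0^{n-2}}\circ\phi_i$ for $i\in\{0,1\}$, which is built into the notation for the composed maps, together with the normalizations $\phi_0(0)=0$ and $\phi_1(1)=1$ that define $\mathcal{K}_{R}(\tilde\rho,\rho,Q)$. Since every word of length $n$ is of the form $\omega_0^{n-2}i$, the starting point is a telescoping observation. For the left endpoints, $\phi_{\omega_0^{n-2}i}(0)=\phi_{\omega_0^{n-2}}(\phi_i(0))$ equals $\phi_{\omega_0^{n-2}}(0)\in\mathscr{C}_{n-1}$ when $i=0$, and equals $\phi_{\omega_0^{n-2}}(\phi_1(0))$ when $i=1$; symmetrically the right endpoint $\phi_{\omega_0^{n-2}i}(1)$ lies in $\mathscr{C}_{n-1}$ when $i=1$, and equals $\phi_{\omega_0^{n-2}}(\phi_0(1))$ when $i=0$. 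Reading the same identities the other way (append a $0$ to a left endpoint, a $1$ to a right endpoint) shows $\mathscr{C}_{n-1}\subseteq\mathscr{C}_n$, so that the set difference is meaningful.

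For the inclusion $\subseteq$, I take $p\in\mathscr{C}_n\setminus\mathscr{C}_{n-1}$ and fix any representation $p=\phi_w(s)$ with $|w|=n$ and $s\in\{0,1\}$. If $s=0$ with last index $\omega_{n-1}=0$, or $s=1$ with $\omega_{n-1}=1$, the telescoping step above places $p$ in $\mathscr{C}_{n-1}$, a contradiction. Hence either $s=0$ with $\omega_{n-1}=1$, giving $p=\phi_{\omega_0^{n-2}1}(0)$, or $s=1$ with $\omega_{n-1}=0$, giving $p=\phi_{\omega_0^{n-2}0}(1)$; in both cases $p$ lies in the right-hand side. This direction uses only a single representation of $p$, so no uniqueness of symbolic address is needed.

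The reverse inclusion $\supseteq$ is the heart of the matter, and is where the separation hypothesis of an IHIFS is essential: the points $\phi_{\omega_0^{n-2}1}(0)$ and $\phi_{\omega_0^{n-2}0}(1)$ obviously belong to $\mathscr{C}_n$, so the real content is that they are \emph{not} already in $\mathscr{C}_{n-1}$. I would argue by localization in cylinders. Writing $I_v=\phi_v([0,1])$, the hypothesis $\phi_0([0,1])\cap\phi_1([0,1])=\emptyset$ together with injectivity propagates, by a standard induction on the common prefix, to pairwise disjointness of all level-$(n-1)$ cylinders. Since each point $\phi_v(0),\phi_v(1)$ of $\mathscr{C}_{n-1}$ lies in its own cylinder $I_v$, disjointness forces every point of $\mathscr{C}_{n-1}$ contained in $I_{\omega_0^{n-2}}$ to be one of its two endpoints $\phi_{\omega_0^{n-2}}(0)$ or $\phi_{\omega_0^{n-2}}(1)$. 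Now $\phi_{\omega_0^{n-2}1}(0)=\phi_{\omega_0^{n-2}}(\phi_1(0))\in I_{\omega_0^{n-2}}$, and as $\phi_{\omega_0^{n-2}}$ is injective it suffices to check $\phi_1(0)\neq 0$ and $\phi_1(0)\neq 1$: the first holds because $0=\phi_0(0)\in\phi_0([0,1])$ while $\phi_1(0)\in\phi_1([0,1])$ and these are disjoint, and the second because $\phi_1(1)=1$ with $\phi_1$ injective. The point $\phi_{\omega_0^{n-2}0}(1)=\phi_{\omega_0^{n-2}}(\phi_0(1))$ is treated by the mirror-image computation, using $\phi_0(1)\neq 1$ (disjointness) and $\phi_0(1)\neq 0$ (injectivity with $\phi_0(0)=0$). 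The main obstacle is precisely this last step: ruling out an accidental collision of a new endpoint with an old one, which cannot be done from the fixed-point relations alone and genuinely requires the disjointness of the cylinders. Combining the two inclusions yields the stated equality.
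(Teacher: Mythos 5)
Your argument is correct in substance, and it supplies exactly what the paper's one-line proof (``The proof is recursive'') omits: the easy inclusion via the normalizations $\phi_{0}(0)=0$, $\phi_{1}(1)=1$, and the genuinely nontrivial point, namely that the new endpoints cannot collide with points of $\mathscr{C}_{n-1}(\phi)$, which you correctly reduce to pairwise disjointness of same-level cylinders plus injectivity. Your route is direct (two inclusions for a fixed $n$, with induction used only for cylinder disjointness) rather than a recursion on $n$ exploiting the self-similarity $\mathscr{C}_{n}(\phi)=\phi_{0}\big(\mathscr{C}_{n-1}(\phi)\big)\cup\phi_{1}\big(\mathscr{C}_{n-1}(\phi)\big)$, but the content is the same, and your version has the merit of isolating exactly where the separation hypothesis enters.

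One claim at the start of your proof is, however, false as written, although the needed repair is to the paper's notation rather than to your argument. You assert that the factorization $\phi_{\omega_{0}^{n-2}i}=\phi_{\omega_{0}^{n-2}}\circ\phi_{i}$ is ``built into the notation.'' It is not: the paper defines $\phi_{\omega_{p}^{q}}=\phi_{\omega_{q}}\circ\cdots\circ\phi_{\omega_{p}}$, so under the declared convention the appended symbol acts \emph{outermost}, i.e.\ $\phi_{\omega_{0}^{n-2}i}=\phi_{i}\circ\phi_{\omega_{0}^{n-2}}$. Under that literal reading the lemma is false: for $\phi_{0}(x)=x/3$, $\phi_{1}(x)=(2+x)/3$ and $n=2$, the right-hand side becomes $\{\phi_{1}(\phi_{0}(0)),\phi_{1}(\phi_{1}(0))\}\cup\{\phi_{0}(\phi_{0}(1)),\phi_{0}(\phi_{1}(1))\}=\{2/3,\,8/9,\,1/9,\,1/3\}$, which contains the old points $1/3,2/3\in\mathscr{C}_{1}(\phi)$ and misses the new points $2/9,7/9$ of $\mathscr{C}_{2}(\phi)\setminus\mathscr{C}_{1}(\phi)$. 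The equality holds precisely under your reading (appended symbol innermost), which is also the reading the paper itself needs elsewhere: it makes the intersections in the definition of $\cantor$ nested, and it is what the proof of Proposition \ref{propit} uses when it prescribes $g_{0}^{J},g_{1}^{J}$ at one new point inside each level-$(N-1)$ cylinder. So you have proved the intended statement; you should simply state explicitly that you take $\phi_{\omega_{0}^{n-2}i}=\phi_{\omega_{0}^{n-2}}\circ\phi_{i}$ (equivalently, keep the paper's convention and restate the lemma with the new symbol prepended). A last small point: for $\phi\in\mathcal{K}_{R}(\tilde\rho,\rho,Q)$ the disjointness $\phi_{0}([0,1])\cap\phi_{1}([0,1])=\emptyset$ that you invoke is not an explicit hypothesis of that class, but it does follow from $\sup\phi_{i}'\le\rho<1/2$ together with the normalizations, since then $\phi_{0}([0,1])\subset[0,\rho]$ and $\phi_{1}([0,1])\subset[1-\rho,1]$; this is the same observation that underlies Lemma \ref{disjoints}.
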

\begin{proof}
The proof is recursive.
\qed\end{proof}
\begin{lemma}\label{interpol} Let $(\xi_{u})_{u\in
    \mathscr{C}_{n}(\phi)}$ be a (finite) sequence of complex numbers.
Then for any $\phi\in \mathcal{K}_{R}(\tilde\rho,\rho,Q)$, the  polynomial
$$
h(z)=\sum_{u\in\mathscr{C}_{n}(\phi) }\xi_{u}\prod_{\stackrel{y\in
  \mathscr{C}_{n}(\phi)}{  y\neq u}}\frac{z-y}{u-y}
$$
satisfies
$$
\sup_{z\in D_{R}}|h(z)|+ \sup_{z\in D_{R}}|h'(z)|\le
16\;4^{n}\;\left(\frac{1}{2}+R\right)^{2^{n}+2}
\;\tilde\rho^{-8\,2^{n}}\sup_{z\in \mathscr{C}_{n}(\phi)}\big|\xi_{z}\big|\;. 
$$
\end{lemma}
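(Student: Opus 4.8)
The plan is to bound each Lagrange basis polynomial
$L_{u}(z)=\prod_{y\neq u}(z-y)/(u-y)$ separately and then assemble the estimate for $h=\sum_{u}\xi_{u}L_{u}$ by the triangle inequality. Writing $N=\mathrm{Card}\,\mathscr{C}_{n}(\phi)$, one has
$\sup_{D_{R}}|h|\le N\,\max_{u}\sup_{D_{R}}|L_{u}|\,\sup_{z}|\xi_{z}|$ and likewise for $h'$, so it suffices to control a single basis polynomial and its derivative, up to the global factor $N$ which will be absorbed into $16\cdot4^{n}$. For one $L_{u}$ I split the bound into an \emph{upper} bound on the numerator $\prod_{y\neq u}(z-y)$ and a \emph{lower} bound on the denominator $\prod_{y\neq u}|u-y|$.

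The numerator is easy. Every node $y\in\mathscr{C}_{n}(\phi)$ lies in $[0,1]$, since each $\phi_{i}$ maps $[0,1]$ into itself; as $[0,1]$ sits inside the disk $D_{R}$ centred at $1/2$, we get $|z-y|\le 1/2+R$ for all $z\in D_{R}$, and multiplying the $N-1$ factors gives the power $(1/2+R)^{2^{n}+2}$. For the derivative I apply the Leibniz rule to $\prod_{y\neq u}(z-y)$: differentiating drops one linear factor and produces a sum of at most $N-1$ terms, which costs only an extra factor polynomial in $2^{n}$; this factor, together with the constant $N$ from the triangle inequality, is absorbed into $16\cdot4^{n}$.

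The heart of the matter, and the step I expect to be the main obstacle, is the lower bound on $\prod_{y\neq u}|u-y|$, that is, controlling the mutual separation of the interpolation nodes. I would group the other nodes $y$ according to the generation $k$ of the smallest cylinder $\phi_{\tau_{0}^{k-1}}([0,1])$ containing both $u$ and $y$. Inside such a cylinder the two points lie in distinct sibling cylinders of generation $k+1$ (or are the two endpoints of the generation-$n$ cylinder when $k=n$); since the defining maps satisfy $\phi_{0}(1)\le\rho<1-\rho\le\phi_{1}(0)$, the two first-generation cylinders are separated by a gap at least $1-2\rho$, while the generation-$k$ map has derivative at least $\tilde\rho^{k}$ on $[0,1]$, whence $|u-y|\ge\tilde\rho^{k}(1-2\rho)$ (and $|u-y|\ge\tilde\rho^{n}$ when $k=n$). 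The number of nodes $y$ at common depth exactly $k$ is the number of generation-$n$ endpoints inside a single generation-$(k+1)$ cylinder, which is of order $2^{n-k}$. This yields
\[
\prod_{y\neq u}|u-y|\ \ge\ \prod_{k=0}^{n}\big(\tilde\rho^{k}(1-2\rho)\big)^{\mathcal{O}(2^{n-k})}.
\]
Because both $\sum_{k}k\,2^{n-k}$ and $\sum_{k}2^{n-k}$ are $\mathcal{O}(2^{n})$ (the series $\sum_{k}(k+1)2^{-k}$ converges to a constant), the product is bounded below by $\tilde\rho^{\mathcal{O}(2^{n})}$, and absorbing the disjointness constants $(1-2\rho)$ produces the factor $\tilde\rho^{-8\cdot2^{n}}$. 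The difficulty is precisely this balancing act: the gaps shrink geometrically like $\tilde\rho^{k}$ while the number of nearby nodes grows like $2^{n-k}$, and the stated exponent $8\cdot2^{n}$ is exactly the outcome of weighing these two effects against each other through the convergent geometric series.
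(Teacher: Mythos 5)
Your proposal is correct and takes essentially the same route as the paper: the paper's entire proof is the remark that, seen from a node $u$, the nearest point of $\mathscr{C}_{n}(\phi)$ is at distance at least $\tilde\rho^{\,n}$, the next nearest at distance at least $\tilde\rho^{\,n-1}$, and so on, which is exactly your grouping of the nodes by the generation of the smallest common cylinder, combined with the convergence of $\sum_{k}(k+1)2^{-k}$, the trivial numerator bound $(1/2+R)$ per factor, and the triangle inequality over the Lagrange basis. If anything, your write-up is more careful than the paper's two-line sketch, since you track the gap factors $(1-2\rho)$ explicitly before absorbing them into $\tilde\rho^{-8\cdot 2^{n}}$, a point the paper's proof silently ignores.
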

\begin{proof}
If two points $u$ belongs to  $\mathscr{C}_{n}(\phi)$, the nearest
point in $\mathscr{C}_{n}(\phi)$ is at distance at least $\tilde
\rho^{n}$, the two next nearest neighbor are at distance at least
$\tilde \rho^{n-1}$ etc. The result follows. \qed
\end{proof}

\begin{lemma}\label{disjoints}
Let $\phi\in \mathcal{K}_{R}(\tilde\rho,\rho,Q)$. 
For any integer $n$, when $\omega_{0}^{n-1}$  varies in $\{0,1\}^{n}$,
the intervals
$$
[\phi_{\omega_{0}^{n-1}}(0),\phi_{\omega_{0}^{n-1}}(1)]
$$ 
are pairwise disjoint, and their pairwise distance is at least
$\tilde \rho^{n}(1-2\rho)$. Their union contains the Cantor set 
$\mathscr{C}(\phi)$. Moreover, the points in  $\mathscr{C}_{n}(\phi)$
belong to $\mathscr{C}(\phi)$.
\end{lemma}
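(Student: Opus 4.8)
The plan is to exploit the monotonicity of the maps together with the uniform bounds $\tilde\rho\le\phi_i'\le\rho<1/2$. Since each $\phi_i$ satisfies $\phi_i'\ge\tilde\rho>0$ on $[0,1]$, every composition $\phi_{\omega_0^{n-1}}$ is strictly increasing, so $I_{\omega_0^{n-1}}:=[\phi_{\omega_0^{n-1}}(0),\phi_{\omega_0^{n-1}}(1)]=\phi_{\omega_0^{n-1}}([0,1])$ is precisely the interval in the statement. At level one I would first record the geometry: from $\phi_0(0)=0$ and $\phi_0'\le\rho$ one gets $\phi_0(1)\le\rho$, and from $\phi_1(1)=1$ and $\phi_1'\le\rho$ one gets $\phi_1(0)\ge 1-\rho$; hence the two intervals $\phi_0([0,1])=[0,\phi_0(1)]$ and $\phi_1([0,1])=[\phi_1(0),1]$ are disjoint and separated by a gap of length at least $(1-\rho)-\rho=1-2\rho$.

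For disjointness and the distance estimate at level $n$ I would peel off the outermost maps. Given two distinct words $\omega_0^{n-1}\neq\eta_0^{n-1}$ in $\indb^{n}$, let $j=\max\{i:\omega_i\neq\eta_i\}$, so that $\omega_i=\eta_i$ for $i>j$ while $\omega_j\neq\eta_j$. Writing $\phi_{\omega_0^{n-1}}=\phi_{\omega_{j+1}^{n-1}}\circ\phi_{\omega_0^{j}}$ (and the analogue for $\eta$, with the same outer factor $\phi_{\omega_{j+1}^{n-1}}$), I would note that $\phi_{\omega_0^{j}}([0,1])\subseteq\phi_{\omega_j}([0,1])$ and $\phi_{\eta_0^{j}}([0,1])\subseteq\phi_{\eta_j}([0,1])$ sit in the two disjoint level-one intervals, hence are at distance at least $1-2\rho$. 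The outer factor is a composition of $n-1-j$ maps, so it is increasing with derivative at least $\tilde\rho^{\,n-1-j}$ and therefore satisfies $|\phi_{\omega_{j+1}^{n-1}}(a)-\phi_{\omega_{j+1}^{n-1}}(b)|\ge\tilde\rho^{\,n-1-j}|a-b|$. Applying it to the two separated subintervals gives $d\big(I_{\omega_0^{n-1}},I_{\eta_0^{n-1}}\big)\ge\tilde\rho^{\,n-1-j}(1-2\rho)\ge\tilde\rho^{\,n}(1-2\rho)>0$, which yields both disjointness and the claimed bound.

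The covering and endpoint statements I would obtain from the self-similarity $\mathscr{C}=\phi_0(\mathscr{C})\cup\phi_1(\mathscr{C})$ together with $\mathscr{C}\subseteq[0,1]$. Iterating this relation $n$ times gives $\mathscr{C}=\bigcup_{\omega_0^{n-1}\in\indb^{n}}\phi_{\omega_0^{n-1}}(\mathscr{C})\subseteq\bigcup_{\omega_0^{n-1}\in\indb^{n}}\phi_{\omega_0^{n-1}}([0,1])$, which is exactly the union of the intervals, proving the covering claim. For the last assertion, $0$ is the fixed point of the contraction $\phi_0$ and $1$ that of $\phi_1$; since $\mathscr{C}$ is closed and each $\phi_i(\mathscr{C})\subseteq\mathscr{C}$, iterating $\phi_0^m(x)\to0$ and $\phi_1^m(x)\to1$ for $x\in\mathscr{C}$ shows $0,1\in\mathscr{C}$. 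Then $\phi_{\omega_0^{n-1}}(\mathscr{C})\subseteq\mathscr{C}$ for every word forces the endpoints $\phi_{\omega_0^{n-1}}(0)$ and $\phi_{\omega_0^{n-1}}(1)$ to lie in $\mathscr{C}$, i.e. $\mathscr{C}_n(\phi)\subseteq\mathscr{C}(\phi)$.

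The only genuinely delicate point is the bookkeeping in the distance estimate: one must identify the outermost index $j$ of disagreement, keep the composition order in $\phi_{\omega_0^{n-1}}=\phi_{\omega_{n-1}}\circ\cdots\circ\phi_{\omega_0}$ straight, and check that pushing a separated pair of subintervals through the common outer factor scales distances by at least the factor $\tilde\rho^{\,n-1-j}$ (using the lower bound on the derivatives). Everything else is a routine consequence of monotonicity and the uniform bounds on $\phi_i'$.
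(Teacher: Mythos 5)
Your proof is correct and takes essentially the same approach as the paper, which disposes of this lemma in one line (``the proof follows recursively from $\rho<1/2$ and the fact that $0$ and $1$ are fixed by $\phi_0$ and $\phi_1$''). Your factorization through the outermost index of disagreement is exactly that recursion unrolled: the level-one gap $1-2\rho$ propagated through the common outer composition via the derivative lower bound $\tilde\rho$, plus the fixed-point argument for the endpoints.
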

\begin{proof}
The proof follows easily recursively 
from the fact that $\rho<1/2$, and the
point $0$ (respectively $1$) is fixed by the map $\phi_{0}$
(respectively $\phi_{1}$).
\qed\end{proof}

The following Lemma is a particular version of Lemma 4.1 in \cite{bcc}
($I=I'=[0,1]$ in the notations of this paper),
and will be used to get  a lower bound on the Hausdorff distance of
two closed sets $F$ and $F'$ with holes $H$ and $H'$. 

\begin{lemma}\label{infhaus}
Let $F$ and $F'$ be two closed subsets of $[0,1]$. Let $H=[c,d]$ and
$H'=[c',d']$ be closed sub-intervals of $]0,1[$. Assume that
$$
\{c,d\}\subset F,\; \{c',d'\}\subset F',\;  
F\cap \stackrel{\scriptscriptstyle{\circ}}{H} =
\emptyset,\; F'\cap \stackrel{\scriptscriptstyle{\circ}}{H'} =
\emptyset\;.
$$
 Moreover, assume that  for some $\epsilon>0$
$$
|c-d|>2\epsilon,\; |c'-d'|>2\epsilon,\; \max
\big\{|c-c'|,|d-d'|\big\}>\epsilon\;.
$$
Then
$$
d(F,F')>\epsilon\;.
$$
\end{lemma}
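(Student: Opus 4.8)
The plan is to avoid estimating the full Hausdorff distance and instead exhibit a single point of one of the two sets whose distance to the other set already exceeds $\epsilon$. Since $d(F,F')$ is a maximum of two suprema, producing one such witness is enough to conclude $d(F,F')>\epsilon$.

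First I would cut down the number of configurations by symmetry. Swapping the roles of $F$ and $F'$ exchanges $(c,d)$ with $(c',d')$ and leaves both the hypotheses and $d(F,F')$ unchanged, while the reflection $x\mapsto 1-x$ of $[0,1]$ is an isometry that sends the two holes to holes with their left and right endpoints interchanged and again preserves every hypothesis. Using these two moves, and noting that the displacement $\max\{|c-c'|,|d-d'|\}>\epsilon$ is realized by one of the two endpoint pairs, I may assume that the displaced pair is the right one and is oriented, i.e. $d'-d>\epsilon$.

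The engine of the argument is the following elementary fact, which I would isolate first: if a point $p$ of $F'$ lies strictly inside the hole of $F$ at distance more than $\epsilon$ from both its endpoints, that is $c+\epsilon<p<d-\epsilon$, then $d(p,F)>\epsilon$. Indeed, since $F\cap(c,d)=\emptyset$ the nearest candidates in $F$ are $c$ and $d$, and both are at distance $\min\{p-c,\,d-p\}>\epsilon$; any point of $F$ lying outside $(c,d)$ is even farther. The width hypothesis $|c-d|>2\epsilon$ is exactly what makes the target window $(c+\epsilon,d-\epsilon)$ nonempty, and symmetrically for $H'$. Thus the entire problem reduces to certifying that one of the four endpoints $c,d,c',d'$ lands $\epsilon$-deep inside the opposite hole.

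Carrying out this localization is the step I expect to be the main obstacle. With the normalization $d'-d>\epsilon$, the natural witness is $d\in F$ seen inside $H'=[c',d']$: the right inequality $d<d'-\epsilon$ is immediate, so everything hinges on the left inequality $c'+\epsilon<d$, and when it holds the engine above finishes the proof. When it fails, $c'$ sits within $\epsilon$ to the left of $d$, and I would then play the second width hypothesis $|c'-d'|>2\epsilon$ against the position of $d$ to force instead one of $c'$ or $d'$ into the interior of $H=[c,d]$, invoking $c,d\in F$ and $F\cap(c,d)=\emptyset$. Organizing this dichotomy so that in every configuration at least one endpoint is certified deep in the opposite hole, together with a careful treatment of the boundary cases produced by the strict inequalities, is the delicate part; I expect the two width conditions $>2\epsilon$ and the displacement $>\epsilon$ to supply precisely the slack needed to close one branch of the case analysis.
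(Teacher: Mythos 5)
Your ``engine'' and your main case are sound, and together they are exactly the one-line argument the paper has in mind when it says the lemma ``follows at once from the definition of the Hausdorff distance'': after your normalization $d'-d>\epsilon$, if in addition $c'+\epsilon<d$, then the point $d\in F$ lies in $(c'+\epsilon,d'-\epsilon)$, every point of $F'$ lies outside the open interval $(c',d')$, hence $d(d,F')\ge\min\{d-c',d'-d\}>\epsilon$ and therefore $d(F,F')>\epsilon$.

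The case you defer, however, is not a delicate technicality that careful bookkeeping will close: it cannot be closed, because the statement is false in precisely that configuration. Take $\epsilon=1/10$, $F=[0,0.3]\cup\{0.6\}\cup[0.85,1]$ with hole $H=[c,d]=[0.3,0.6]$, and $F'=[0,0.35]\cup\{0.55\}\cup[0.85,1]$ with hole $H'=[c',d']=[0.55,0.85]$. Every hypothesis holds: $c,d\in F$ and $F$ meets neither open hole endpoint region, i.e. $F\cap(0.3,0.6)=\emptyset$; $c',d'\in F'$ and $F'\cap(0.55,0.85)=\emptyset$; $|c-d|=|c'-d'|=0.3>2\epsilon$; and $\max\{|c-c'|,|d-d'|\}=0.25>\epsilon$. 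Yet $d(F,F')=0.05<\epsilon$: the only point of $F$ not in $F'$ is $0.6$, at distance $0.05$ from $0.55\in F'$, and the only points of $F'$ not in $F$ lie in $(0.3,0.35]\cup\{0.55\}$, all within $0.05$ of $F$. This example sits exactly in your deferred branch ($d\le c'+\epsilon$), and there neither $c'$ nor $d'$ is $\epsilon$-deep inside $H$ (here $c'>d-\epsilon$ and $d'>d$), so no endpoint can serve as a witness --- nothing can, since the conclusion fails. What is missing is a hypothesis forbidding such staggered holes, for instance $\min\{|c-c'|,|d-d'|\}\le\epsilon$, or more generally that the two holes overlap on an interval of length greater than $\epsilon$, i.e. $\min\{d,d'\}-\max\{c,c'\}>\epsilon$. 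Under either extra hypothesis your deferred case never occurs (if, say, $c\ge c'-\epsilon$ and $d'-d>\epsilon$, then $d>c+2\epsilon\ge c'+\epsilon$), so your main case already finishes the proof; and this strengthened form is the one actually needed in Proposition \ref{propit}, where the two Cantor sets' corresponding gaps nearly coincide. So the honest verdict is: your witness strategy is the right one, but the dichotomy you hoped to organize does not exist, and the obstruction you sensed is a genuine counterexample to the lemma as stated rather than a gap in your argument alone.
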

The proof follows at once from the definition of the Hausdorff
distance.

We see from this result and Lemma \ref{disjoints} that in order to
construct ``many'' Cantor sets it is enough to construct the points
$\phi_{\omega_{0}^{k-1}}(0)$ and $\phi_{\omega_{0}^{k-1}}(1)$. However
since we want these points to be generated by analytic maps there are
some constraints.

The following result provides an upper bound for $\mathscr{N}(\epsilon)$

\begin{theorem}
For any $R$, $\rho$ and $\tilde\rho$ and $Q$ as above, 
there exists a constant $K=K(R,\rho\tilde,\rho,Q)>0$ such that for any
$1/2>\epsilon>0$ 
$$
\mathscr{N}(\epsilon)\le K\;\big(\log\epsilon\big)^{2}\:.
$$
\end{theorem}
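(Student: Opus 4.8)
The plan is to reduce the counting of $\epsilon$-separated Cantor sets to a metric entropy estimate for the generating maps, and then to invoke the classical Kolmogorov--Tikhomirov bound on the entropy of classes of bounded analytic functions. The reduction rests on the fact that the Cantor set $\cantor(\phi)$ depends Lipschitz-continuously, for the Hausdorff distance, on the generating pair $\phi=(\phi_{0},\phi_{1})$ measured in the supremum norm on $[0,1]$.

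First I would establish the Lipschitz estimate
$$
d\big(\cantor(\phi),\cantor(\phi')\big)\le \frac{1}{1-\rho}\,
\max_{i\in\indb}\ \sup_{x\in[0,1]}\big|\phi_i(x)-\phi_i'(x)\big|
$$
for any two $\phi,\phi'\in\mathcal{K}_{R}(\tilde\rho,\rho,Q)$. This is exactly the recursion proving Lemma \ref{bornecomp}, applied to any two elements of the class: replacing the composed maps one factor at a time and using that the outer factors contract by at most $\rho$ gives $\sup_{x\in[0,1]}\big|\phi_{\omega_{0}^{n-1}}(x)-\phi'_{\omega_{0}^{n-1}}(x)\big|\le (1-\rho)^{-1}\max_{i}\sup_{[0,1]}|\phi_i-\phi_i'|$, uniformly in $n$ and in the address. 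Letting $n\to\infty$ and coding each point of $\cantor(\phi)$ by its symbolic address, the point with the same address in $\cantor(\phi')$ stays within the same bound, which yields the displayed Hausdorff estimate. Consequently, if two Cantor sets are $\epsilon$-separated then the corresponding pairs are $(1-\rho)\epsilon$-separated in the supremum norm on $[0,1]$, so that, with $M_{\delta}$ the maximal number of $\delta$-separated points of a set,
$$
\mathscr{N}(\epsilon)\le M_{(1-\rho)\epsilon}\big(\mathcal{K}_{R}(\tilde\rho,\rho,Q)\big).
$$

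Next I would bound this packing number by the metric entropy of the ambient analytic class. Since $\mathcal{K}_{R}(\tilde\rho,\rho,Q)\subset\mathcal{H}_{R}(\tilde\rho,\rho,Q)^{2}$, and every element of $\mathcal{H}_{R}$ is analytic and bounded by $Q$ on the disk $D_{R}$ with $R>1/2$ (so that $[0,1]\subset D_{R}$), the supremum norm on $[0,1]$ is the norm of restriction to a compact set strictly inside the domain of analyticity. The Kolmogorov--Tikhomirov theorem (\cite{kt}) then gives, for the covering number $N_{\delta}$ of such a class in this norm, the entropy bound $\log_2 N_{\delta}(\mathcal{H}_{R})\le C_{R}\,(\log\delta^{-1})^{2}$ for all small $\delta$, with $C_R$ depending only on $R$ and $Q$; passing to the pair and to the subset $\mathcal{K}_{R}$ only changes the constant. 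Using $M_{\delta}\le N_{\delta/2}$ together with $\delta=(1-\rho)\epsilon$, I obtain
$$
\log\mathscr{N}(\epsilon)\le K\,\big(\log\epsilon^{-1}\big)^{2},\qquad K=K(R,\tilde\rho,\rho,Q),
$$
valid for every $0<\epsilon<1/2$ (note $\log\epsilon^{-1}=|\log\epsilon|$), which is the asserted bound on the $\epsilon/2$-capacity.

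I expect the \emph{main obstacle} to be the entropy estimate for the analytic class in the precise form needed: distances are measured in the supremum norm on the small set $[0,1]$, whereas the only compactness comes from the uniform bound $Q$ on the larger disk $D_{R}$. This is exactly the regime in which analyticity forces the entropy to grow only like $(\log\epsilon^{-1})^{2}$ rather than like a power of $\epsilon^{-1}$, and it is where the gap $R>1/2$ between the two radii is essential. If one prefers a self-contained argument to citing \cite{kt}, the same polylogarithmic entropy can be recovered by truncating the Taylor expansion about $z=1/2$ at degree $\mathcal{O}(\log\epsilon^{-1})$, the tail being controlled by $Q$ and the ratio of radii through Cauchy estimates, and then counting the admissible quantized coefficients; the summation of the per-coefficient counts again produces a leading term of order $(\log\epsilon^{-1})^{2}$. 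The remaining ingredients, namely the Lipschitz reduction and the elementary passage from packing to covering numbers, are routine.
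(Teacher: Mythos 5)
Your proposal is correct and takes essentially the same route as the paper: the Lipschitz dependence of $\mathscr{C}(\phi)$ on $\phi$ (the paper's Lemma \ref{bornecomp}) reduces the counting of $\epsilon$-separated Cantor sets to the metric entropy of $\mathcal{K}_{R}(\tilde\rho,\rho,Q)$ in the sup norm on $[0,1]$, and the paper bounds that entropy by exactly the Taylor-truncation-plus-coefficient-quantization argument you offer as the self-contained alternative to citing \cite{kt} (the paper itself remarks that this step also follows directly from the results of \cite{kt}). Note as well that you correctly read the conclusion as a bound on $\log\mathscr{N}(\epsilon)$, i.e.\ on the $\epsilon/2$-capacity, which is what the paper's proof actually establishes despite the literal wording of the theorem.
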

\begin{proof} 
The proof is analogous to the proof of Theorem \ref{thm:analytic} (see
\cite{bcc}) but we sketch it for the convenience of the reader.
Let  $\phi\in  \mathcal{K}_{R}(\tilde\rho,\rho,Q)$, we have for any
$z$ in the interior of $D_{R}$
$$
\phi_{0}(z)=\sum_{n=0}^{\infty} \phi_{0}^{(n)}\left(z-\frac{1}{2}\right)^{n}
$$
with
$$
\big| \phi_{0}^{(n)}\big|\le \frac{Q}{R^{n}}\;.
$$
A similar expression and estimate hold for $\phi_{1}$. The number
$\phi_{0}^{(n)}$ is $n!$ times the $n^{\mathrm{th}}$ derivative of
$\phi_{0}$ in $z=1/2$.  Let
$$
N=\left\lceil\frac{\log\big(16\,Q/[(2R-1)\epsilon(1-\rho)]}{\log(2R)}\;.
\right\rceil
$$
and define
$$
\tilde \phi_{0}(z)=\sum_{n=0}^{N}
\phi_{0}^{(n)}\left(z-\frac{1}{2}\right)^{n},\qquad
\tilde \phi_{1}(z)=\sum_{n=0}^{N}
\phi_{1}^{(n)}\left(z-\frac{1}{2}\right)^{n}.
$$
We have
$$
\sup_{x\in[0,1]}\big|\phi_{0}(x)-\tilde\phi_{0}(x)\big| \le 
\sum_{n=N+1}^{\infty}\frac{Q}{(2R)^{n}}=\frac{Q}{(2R)^{N}(2R-1)}
\le \frac{\epsilon(1-\rho)}{16}\;.
$$
We define for each  $0\le n\le N$ and $\epsilon>0$ the finite set
$$
\mathscr{B}_{\epsilon,n}=\left\{
(p+iq)\;\epsilon \;(1-\rho)/16\,,\, -\frac{17\,Q}{(2R)^{n}\epsilon(1-\rho)} 
\le p,q\le \frac{17\,Q}{(2R)^{n}\epsilon(1-\rho)} \right\}\;.
$$
Let $\mathscr{Q}_{\epsilon,N}$ denote the set of  finite sequences of
complex numbers given by 
$$
\mathscr{Q}_{\epsilon,N}=\big\{(\xi_{r})_{0\le r\le N}\;,\;
\xi_{r}\in \mathscr{B}_{\epsilon,r}\; \forall \;0\le r\le N\big\}
$$
To each finite sequence of complex numbers $\underline\xi$
in $\mathscr{Q}_{\epsilon,N}$,
we associate the polynomial 
$$
f_{\underline\xi}(z)=\sum_{r=0}^{N}\xi_{r}\left(z-\frac{1}{2}\right)^{r}.
$$
It is easy to verify that for any  $\phi\in
\mathcal{K}_{R}(\tilde\rho,\rho,Q)$, there is a sequence 
$\underline\xi\in \mathscr{D}_{\epsilon,N}$  such that
$$
\sup_{x\in[0,1]}\big|f_{\underline\xi}(x)
-\phi_{0}(x)\big|<\frac{(1-\rho)\epsilon}{8}\;.
$$
A similar estimate holds for $\phi_{1}$ (with in general another
sequence  $\underline\xi$).
This implies that we can find a collection  $\mathscr{G}_{\epsilon}$ of
elements of $\mathcal{K}_{R}(\tilde\rho,\rho,Q)$ with cardinality at
most 
$$
\mathrm{Card}\big(\mathscr{G}_{\epsilon}\big)\le 
\mathrm{Card}\big(\mathscr{Q}_{\epsilon,N}\big)^{2}\le 
\left(\frac{34\,Q}{\epsilon(1-\rho)}\right)^{N}\;,
$$
such that for any $\phi\in\mathcal{K}_{R}(\tilde\rho,\rho,Q)$, we can
find a $\tilde \phi\in \mathscr{G}_{\epsilon}$ such that
$$
\max\big\{\sup_{x\in[0,1]}\big|\phi_{0}(x)-\tilde\phi_{0}(x)\big|,
\sup_{x\in[0,1]}\big|\phi_{1}(x)-\tilde\phi_{1}(x)\big|\big\}\le 
\frac{(1-\rho)\epsilon}{4}\;.
$$
This also follows directly from results in \cite{kt}.

From Lemmas \ref{bornecomp} and \ref{disjoints}, we have 
$$
d\big(\mathscr{C}(\phi),\mathscr{C}(\tilde\phi))\le \epsilon\;.
$$
This finishes the proof of the Theorem.
\qed\end{proof}

\begin{proposition}\label{propit}
 Given $R$, $\tilde\rho$, $\rho$ and $Q$ as above,
  there exists 
$\tilde\epsilon=\epsilon(R,\tilde\rho,\rho,Q)>0$ 
such that if 
$\phi^{0}\in
\mathcal{K}_{R}(\tilde\rho,\rho,Q)$,
$0<\epsilon_{0}<\tilde\epsilon$, and
$$
0<\epsilon_{1}<\epsilon_{0}^{5+2\log_{2}(R+1/2)-16\log_{2}\tilde\rho}\;,
$$ 
there exists a subset
$\mathscr{Q}=\mathscr{Q}(R,\tilde\rho,\rho,Q,
\phi^{0},\epsilon_{0},\epsilon_{1})$ of 
$\mathcal{K}_{R}(\tilde\rho-\epsilon_{0},\rho+\epsilon_{0}
,Q+\epsilon_{0})$
with cardinality at least 
$e^{2^{-4}\log(\epsilon_{0}/\epsilon_{1})\log\epsilon_{0}^{-1}}$   such that for any $\phi\in
\mathscr{Q}$
$$
d\big(\mathscr{C}(\phi^{0}),\mathscr{C}(\phi)\big)\le
\frac{\epsilon_{0}}{3}\;, 
$$
and
$$
\sup_{z\in D_{R}}\big|\phi^{0}_{0}(z)-\phi_{0}(z)\big|+
\sup_{z\in D_{R}}\big|\phi^{0}_{1}(z)-\phi_{1}(z)\big|\le
 \frac{\epsilon_{0}}{4}\;.
$$
Moreover,  for any $\phi\neq \phi'$ in $\mathscr{Q}$
$$
d\big(\mathscr{C}(\phi),\mathscr{C}(\phi')\big)\ge \epsilon_{1}\;.
$$
\end{proposition}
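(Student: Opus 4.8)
The plan is to build the family $\mathscr{Q}$ by perturbing the reference system $\phi^{0}$ with analytic maps that are \emph{frozen below a well-chosen level and free above it}. First I fix an integer $n$ with $2^{n}$ of order $\log\epsilon_0^{-1}$ (the precise multiple being dictated by the interpolation cost below), and I look for perturbed pairs $\phi=(\phi^0_0+g_0,\phi^0_1+g_1)$ in which $g_0,g_1$ are polynomials that vanish on $\mathscr{C}_{n-2}(\phi^0)$ (in particular at $0$ and $1$, so that the fixed–point conditions of $\mathcal{K}_R$ survive) and take prescribed values, chosen from a finite grid, at the new nodes $\mathscr{C}_{n-1}(\phi^0)\setminus\mathscr{C}_{n-2}(\phi^0)$. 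Such $g_0,g_1$ are produced by the interpolation Lemma~\ref{interpol} at level $n-1$.

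The first step is a freezing lemma. Running the orbit recursion $p_0=0$, $p_{k+1}=\phi_{\sigma_k}(p_k)$ for both systems and using the nesting $\mathscr{C}_{m-1}(\phi^0)\subset\mathscr{C}_m(\phi^0)$, an immediate induction shows that if $g_0,g_1$ vanish on $\mathscr{C}_{n-2}(\phi^0)$ then the level-$k$ points of $\phi$ coincide with those of $\phi^0$ for every $k\le n-1$. Consequently each new level-$n$ endpoint is displaced by \emph{exactly} the prescribed value of $g_{\sigma_{n-1}}$ at the corresponding unperturbed level-$(n-1)$ point, with no composition correction (the endpoints reading a node of $\mathscr{C}_{n-2}$ simply stay put). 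This exact-displacement property is the device that will make the separation lower bound clean.

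Next I control the perturbation. By Lemma~\ref{interpol} the sup-norms of $g_0,g_1$ and of their derivatives on $D_R$ are at most $L:=16\,4^{\,n-1}(\tfrac12+R)^{2^{n-1}+2}\tilde\rho^{-8\cdot 2^{n-1}}$ times the grid amplitude $\Delta$. Taking $\Delta$ of order $\epsilon_0/L$ forces $\sup_{D_R}(|g_i|+|g_i'|)\le\epsilon_0$, so that $\phi\in\mathcal{K}_R(\tilde\rho-\epsilon_0,\rho+\epsilon_0,Q+\epsilon_0)$, the $D_R$-bound $\epsilon_0/4$ holds, and Lemma~\ref{bornecomp} gives $d(\mathscr{C}(\phi^0),\mathscr{C}(\phi))\le\epsilon_0/3$. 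For separation, take $\phi\neq\phi'$ in the family: they differ at some new node by at least the grid spacing, which I fix $\gtrsim\epsilon_1$. By the freezing property both Cantor sets share the same level-$(n-1)$ structure and the differing point is an endpoint of a level-$n$ gap; since $2^n\sim\log\epsilon_0^{-1}$ the gaps from Lemma~\ref{disjoints} (for the enlarged rates) are only polylogarithmically small in $\epsilon_0^{-1}$, hence exceed $2\epsilon_1$, and Lemma~\ref{infhaus} yields $d(\mathscr{C}(\phi),\mathscr{C}(\phi'))>\epsilon_1$.

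Finally I count. There are $\Theta(2^n)$ independent grid parameters, each ranging over $\sim\Delta/\epsilon_1\sim\epsilon_0/(L\epsilon_1)$ admissible values, so $\log\mathrm{Card}(\mathscr{Q})\sim 2^{n}\log\!\big(\epsilon_0/(L\epsilon_1)\big)$. With $2^{n}\sim\log\epsilon_0^{-1}$ one has $\log_2 L\approx 2^{n-1}\big(\log_2(R+\tfrac12)-8\log_2\tilde\rho\big)$, so $L$ is a fixed polynomial power of $\epsilon_0^{-1}$; the hypothesis $\epsilon_1<\epsilon_0^{\,5+2\log_2(R+1/2)-16\log_2\tilde\rho}$ is exactly what guarantees $\epsilon_0/(L\epsilon_1)\gg1$ with margin, and a short computation gives $\mathrm{Card}(\mathscr{Q})\ge e^{2^{-4}\log(\epsilon_0/\epsilon_1)\log\epsilon_0^{-1}}$. \textbf{The main obstacle} is precisely the tension in this step: the interpolation cost grows like $\tilde\rho^{-8\cdot 2^{n-1}}$, forcing the prescribed displacements to be tiny ($\lesssim\epsilon_0/L$) in order to stay in the admissible class, while they must still be resolvable at scale $\epsilon_1$; reconciling these two demands is what simultaneously pins $n$ to be logarithmic in $\epsilon_0^{-1}$ and forces the precise polynomial smallness of $\epsilon_1$ in the statement.
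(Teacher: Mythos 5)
Your proposal is correct and follows essentially the same route as the paper's proof: freeze the perturbation on a coarse level set $\mathscr{C}_{N-1}(\phi^{0})$, prescribe values from a grid of spacing $\epsilon_{1}$ at the new nodes of the next level, extend by Lagrange interpolation via Lemma~\ref{interpol}, then use Lemma~\ref{bornecomp} for proximity and Lemmas~\ref{disjoints} and~\ref{infhaus} for the $\epsilon_{1}$-separation, with $2^{N}\sim\log\epsilon_{0}^{-1}$ forced by exactly the interpolation-cost/resolution tension you describe. The only deviations are cosmetic: your levels are shifted by one relative to the paper's, and your grid amplitude is of order $\epsilon_{0}/L$ rather than the paper's $\sqrt{\epsilon_{0}\epsilon_{1}}$ (both choices satisfy the counting and admissibility requirements under the stated hypothesis on $\epsilon_{1}$).
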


\begin{proof}
For  $\epsilon_{0}>0$ we define the integer $N=N(\epsilon_{0}^{-1})$ by
$$
N=\big\lfloor \log_{2}\log(\epsilon_{0}^{-1})
\big\rfloor\;.
$$ 
In other words
$$
\frac{1}{2} \log(\epsilon_{0}^{-1})\le 
2^{N}\le \log(\epsilon_{0}^{-1})\;.
$$
We will denote by $\mathscr{J}_{N}$ the set of maps from 
$\{0,1\}^{N-2}$ to
$\big\{1,2,\ldots,\lfloor \sqrt{\epsilon_{0}/\epsilon_{1}}
\rfloor\big\}^{4}$. 

For an element $J\in \mathscr{J}_{N}$, we define 
a pair of functions $(g_{0}^{J},g_{1}^{J})$ as follows.
We start  by
defining the functions on the set $\mathscr{C}_{N-1}(\phi^{0})$ by
setting 
$$
g^{J}_{0}(x)=g^{J}_{1}(x)=0
$$
for any $x\in \mathscr{C}_{N-1}(\phi^{0})$. 

We next define the  the two functions on 
$\mathscr{C}_{N}(\phi^{0})\backslash  \mathscr{C}_{N-1}(\phi^{0})$.
Using Lemma \ref{difference}, this is done by setting for any
$\omega_{0}^{N-2}\in \indb^{N-1}$
$$
g^{J}_{0}(\phi^{0}_{\omega_{0}^{N-2}1}(0))=
-J_{1}(\omega_{0}^{N-2})\;\epsilon_{1}\;,\quad
g^{J}_{0}(\phi^{0}_{\omega_{0}^{N-2}0}(1))=
J_{2}(\omega_{0}^{N-2})\;\epsilon_{1}\;,
$$
$$
g^{J}_{1}(\phi^{0}_{\omega_{0}^{N-2}1}(0))=
-J_{3}(\omega_{0}^{N-2})\;\epsilon_{1}\;,\quad
g^{J}_{1}(\phi^{0}_{\omega_{0}^{N-2}0}(1))=
J_{4}(\omega_{0}^{N-2})\;\epsilon_{1}\;.
$$
Finally, we define the functions as polynomials by the Lagrange
interpolation formula 
$$
g^{J}_{0}(x)=\sum_{z\in
\mathscr{C}_{N}(\phi^{0})} g^{J}_{0}(z)\prod_{\stackrel{y\in
\mathscr{C}_{N}(\phi^{0})}{y\neq z }}\frac{x-y}{z-y}\;,
$$
and similarly for $g^{J}_{1}$. 

From the definition of $N$ and the condition on $\epsilon_{1}$ we get
(for $\tilde\epsilon$ small enough)
from  Lemma \ref{interpol} and for 
any  $J\in \mathscr{J}_{N}$  
$$
\sup_{z\in D_{R}}|g^{J}_{0}(z)|+\sup_{z\in D_{R}}|{g^{J}}'_{0}(z)|+
\sup_{z\in D_{R}}|g^{J}_{1}(z)|+\sup_{z\in D_{R}}|{g^{J}}'_{1}(z)|
$$
$$
\le 4
2^{N}\left(\frac{1}{2}+R\right)^{2^{N}} \tilde\rho^{-8\,2^{N}}
\rho^{N}\;\sqrt{\epsilon_{0}\,\epsilon_{1}}\le \frac{\epsilon_{0}}{4}\;,
$$
which implies
$\big(\phi_{0}^{0}+g_{0}^{J},\phi_{1}^{0}+g_{1}^{J}\big)
\in\mathcal{K}_{R}(\tilde\rho-\epsilon_{0},\rho+\epsilon_{0},
Q+\epsilon_{0})$. 

Using Lemmas \ref{disjoints}, \ref{infhaus}, 
and $\tilde\rho^{N}>\epsilon_{1}(1-2\rho)$, 
we conclude that if 
$J$ and $J'$ are two different elements of $\mathscr{J}_{N}$, we have 
$$
d\big(\mathscr{C}(\phi_{0}^{0}+g_{0}^{J},\phi_{1}^{0}+g_{1}^{J}),
\mathscr{C}(\phi_{0}^{0}+g_{0}^{J'},\phi_{1}^{0}+g_{1}^{J'})\big)
>\epsilon_{1}\;.
$$

Using Lemmas \ref{disjoints} and  \ref{bornecomp},
 we conclude that (for $\epsilon_{0}$ small enough), for any $J\in
\mathscr{J}_{N}$
$$
d\big(\mathscr{C}(\phi^{0}),
\mathscr{C}(\phi_{0}^{0}+g_{0}^{J},\phi_{1}^{0}+g_{1}^{J})\big)\le
\frac{\sqrt{\epsilon_{0}\,\epsilon_{1}}}{1-\rho}<
\frac{\epsilon_{0}}{3}\;.
$$
The proposition follows from the estimate
$$
\mathrm{Card}(\mathscr{J}_{N})=
\left\lfloor\sqrt{\frac{\epsilon_{0}}{\epsilon_{1}}}\right\rfloor^{2^{N-2}}\;.
$$
\qed\end{proof}

We can now state and prove the main result of this section which
complements Theorem \ref{thm:analytic}.

\begin{theorem}\label{laborneinf}
There exists Cantor sets $\mathcal{C}$ defined by analytic IHIFS 
such that
$$
\liminf_{\epsilon\to
  0}\frac{\Delta_{\epsilon}(\mathcal{C})}{(\log\epsilon^{-1})^{2}} >0\;.
$$
\end{theorem}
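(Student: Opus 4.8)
The plan is to combine Proposition~\ref{propit} with the elementary counting principle that turns combinatorial capacity into a Kolmogorov-complexity lower bound, and then to promote a \emph{single-scale} bound into a statement about $\liminf_{\epsilon\to 0}$ by an iterated (nested) construction. For the counting principle I would first record that there are at most $2^{p+1}$ programs in $\ensprog$ of length at most $p$, and that one output $\out(\programme)$ can lie within Hausdorff distance $\eta/2$ of at most one member of a family of pairwise $\eta$-separated Cantor sets. Hence, in any family of $M$ pairwise $\eta$-separated Cantor sets, at most $2^{p+1}$ of them satisfy $\Delta_{\eta/2}\le p$; choosing $p=\lfloor\log_2 M\rfloor-2$ shows that at least half of them obey $\Delta_{\eta/2}\ge \log_2 M-3$. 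Applying this to the family $\mathscr{Q}$ of Proposition~\ref{propit}, which is pairwise $\epsilon_1$-separated with cardinality at least $e^{2^{-4}\log(\epsilon_0/\epsilon_1)\log\epsilon_0^{-1}}$, at least half its members $\phi$ satisfy $\Delta_{\epsilon_1/2}(\mathscr{C}(\phi))\ge c_0\,\log(\epsilon_0/\epsilon_1)\log\epsilon_0^{-1}$ for an explicit $c_0>0$.

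Next I would set up the scales. Abbreviate $C_0=5+2\log_2(R+1/2)-16\log_2\tilde\rho$, fix any $C>C_0$, and define scales by $b_k=a_k^{\,C}$ and $a_{k+1}=b_k^{\,2}$, starting from a small $a_0$. I build $\phi^{(0)}=\phi^0,\phi^{(1)},\phi^{(2)},\dots$ by applying Proposition~\ref{propit} at stage $k$ to the reference $\phi^{(k-1)}$ with $(\epsilon_0,\epsilon_1)=(a_k,b_k)$ (the hypothesis $b_k<a_k^{\,C_0}$ holds by the choice of $C$), obtaining a family $\mathscr{Q}_k$, and I let $\phi^{(k)}$ be any element of the high-complexity half furnished above, so that, using $b_k=a_k^{\,C}$ to rewrite $\log(a_k/b_k)\log a_k^{-1}$ as a multiple of $(\log b_k^{-1})^2$,
$$
\Delta_{b_k/2}\big(\mathscr{C}(\phi^{(k)})\big)\ \ge\ c_1\,(\log b_k^{-1})^2 .
$$
Since Proposition~\ref{propit} also gives $\sup_{D_R}|\phi^{(k-1)}_0-\phi^{(k)}_0|+\sup_{D_R}|\phi^{(k-1)}_1-\phi^{(k)}_1|\le a_k/4$ and $\sum_k a_k<\infty$, the maps converge uniformly on $D_R$ to analytic limits $\phi^{(\infty)}=(\phi^{(\infty)}_0,\phi^{(\infty)}_1)$; because $\sum_k a_k$ is made arbitrarily small by shrinking $a_0$, the limit stays in a fixed admissible class $\mathcal{K}_R$, so $\phi^{(\infty)}$ is an analytic IHIFS with Cantor set $\mathscr{C}$.

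The decisive point is that the complexity certified for $\mathscr{C}(\phi^{(k)})$ survives in the limit. Proposition~\ref{propit} bounds each displacement by $d(\mathscr{C}(\phi^{(j-1)}),\mathscr{C}(\phi^{(j)}))\le a_j/3$, whence
$$
\delta_k:=d\big(\mathscr{C}(\phi^{(k)}),\mathscr{C}\big)\ \le\ \tfrac13\sum_{j>k}a_j\ \le\ \tfrac23\,a_{k+1}\ =\ \tfrac23\,b_k^{\,2}\ <\ b_k/4
$$
for $k$ large. Thus any program approximating $\mathscr{C}$ at precision $b_k/4$ approximates $\mathscr{C}(\phi^{(k)})$ at precision $b_k/4+\delta_k\le b_k/2$, so $\Delta_{b_k/4}(\mathscr{C})\ge \Delta_{b_k/2}(\mathscr{C}(\phi^{(k)}))\ge c_1(\log b_k^{-1})^2$. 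For an arbitrary small $\epsilon$ I pick $k$ with $b_{k+1}/4<\epsilon\le b_k/4$; monotonicity of $\Delta$ in $\epsilon$ gives $\Delta_\epsilon(\mathscr{C})\ge c_1(\log b_k^{-1})^2$, while $b_{k+1}=b_k^{\,2C}$ yields $(\log\epsilon^{-1})^2\le 9C^2(\log b_k^{-1})^2$ for $k$ large, so $\Delta_\epsilon(\mathscr{C})/(\log\epsilon^{-1})^2\ge c_1/(9C^2)>0$, which proves the stated $\liminf>0$.

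The delicate part is the nesting in the last two steps, i.e. forcing a \emph{single} Cantor set to be complex at infinitely many scales. The tension is threefold: the infinitely many later perturbations (each as large as $a_j/3$) must sum to far less than the separation scale $b_k$ at which the stage-$k$ bound was certified, which forces $a_{k+1}\ll b_k$; yet $\sum_k a_k$ must stay small enough for $\phi^{(\infty)}$ to remain in a fixed class $\mathcal{K}_R$; and the exponent linking $b_k$ to $a_k$ must keep $\log(a_k/b_k)\log a_k^{-1}$ of order $(\log b_k^{-1})^2$ so that the capacity stays large. The choice $b_k=a_k^{\,C}$, $a_{k+1}=b_k^{\,2}$ reconciles all three constraints simultaneously; once the scales are fixed, the rest is the bookkeeping already packaged in Lemmas~\ref{bornecomp}–\ref{infhaus} and Proposition~\ref{propit}.
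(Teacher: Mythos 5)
Your proposal is correct in its essentials, but it reaches the theorem by a genuinely different and in fact leaner route than the paper. The paper runs a recursion over the scales $\epsilon_p=2^{-K\gamma^p}$ in which it maintains, at every scale, an entire family $\mathscr{E}_p\subset\mathcal{K}_R(\tilde\rho_p,\rho_p,Q_p)$ of cardinality at least $2^{V(\log_2\epsilon_p)^2}$, pairwise $\epsilon_p$-separated, all of whose members carry certified complexity lower bounds at \emph{all} scales $\epsilon_\ell$, $\ell\le p$; it applies Proposition~\ref{propit} to every member, removes the elements approximable by short programs at any scale, and finally extracts the desired Cantor set as an accumulation point, invoking Montel's theorem for compactness. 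You instead build a single nested sequence $\phi^{(k)}$, run the program-counting argument once per scale inside the single family $\mathscr{Q}_k$ furnished by Proposition~\ref{propit}, and let the limit map (which exists by completeness of the sup norm --- no compactness needed) inherit the scale-$b_k$ bound from $\phi^{(k)}$ alone, through the stability estimate $\Delta_{b_k/4}(\mathscr{C})\ge\Delta_{b_k/2}\bigl(\mathscr{C}(\phi^{(k)})\bigr)$, valid because the tail perturbations sum to $\mathcal{O}(a_{k+1})=\mathcal{O}(b_k^2)<b_k/4$. This is the same stability mechanism the paper uses in its final step, but your organization shows that no element ever needs certified complexity at more than its own scale, so the large families at earlier scales (and the removal of short-program elements at all previous scales) are not needed for the bare existence statement. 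What the paper's heavier structure buys is the remark following Theorem~\ref{laborneinf}: keeping exponentially many pairwise separated survivors at every scale yields a lower bound on the $\epsilon$-entropy, i.e.\ on \emph{how many} such Cantor sets there are, which a single path cannot deliver. Two pieces of bookkeeping you should make explicit, both handled in the paper: since each application of Proposition~\ref{propit} lands in the enlarged class $\mathcal{K}_R(\tilde\rho-\epsilon_0,\rho+\epsilon_0,Q+\epsilon_0)$, your exponent $C$ must dominate $5+2\log_2(R+1/2)-16\log_2\tilde\rho$ \emph{uniformly along the drift} (e.g.\ $C>5+2\log_2(R+1/2)-16\log_2(\tilde\rho_0/2)$, exactly as the paper imposes on $\gamma$), and the threshold $\tilde\epsilon$ of the proposition must likewise be taken uniform over the drifted classes; both follow once $a_0$ is chosen small, which your doubly exponentially decreasing scales permit.
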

\begin{proof}
Let $R$, $\tilde\rho_{0}$, $\rho_{0}$ and $Q_{0}$ satisfy the
assumptions defined at the beginning of this section.
We will use  the sequence of numbers 
$$
\epsilon_{p}=2^{-K\;\gamma^{p}}\;,
$$
with $K>0$ (large enough) and $\gamma>1$ to be chosen later on. Let 
$$
\rho_{p+1}=\rho_{p}+\epsilon_{p},\;
\tilde \rho_{p+1}=\tilde\rho_{p}-\epsilon_{p},\;
Q_{p+1}=Q_{p}+\epsilon_{p}\;.
$$
We assume that $K$ is large enough so that
$$
\frac{\tilde\rho_{0}}{2}
<\inf_{p} \tilde\rho_{p}\le\sup_{p} \rho_{p}<\frac{2\,\rho_{0}+1}{4}\;.
$$
Let $V>0$ and $\Gamma>0$ to be chosen later on.
We consider the recursive assumption:

For any integer $p\ge0$, 
there exists a subset $\mathscr{E}_{p}$ of
$\mathcal{K}_{R}(\tilde\rho_{p},\rho_{p},Q_{p})$, with the following
properties. 
\begin{enumerate}[1)]
\item The  cardinality of $\mathscr{E}_{p}$ is at least
$$
2^{V(\log_{2}\epsilon_{p})^{2}}\;.
$$
\item For any $0\le \ell\le p$
$$
\inf_{\phi\in \mathscr{E}_{p}}\Delta_{\epsilon_{\ell}}\big(\mathscr{C}(\phi)\big)
\ge \Gamma (\log_{2}\epsilon_{\ell})^{2}\;.
$$
\item For any $\phi\neq \phi'\in \mathscr{E}_{p}$,
$$
d\big(\mathscr{C}(\phi),\mathscr{C}(\phi')\big)>\epsilon_{p}\;.
$$
\item If $p>0$, for any $\phi\in \mathscr{E}_{p}$, there exists
$\tilde\phi\in \mathscr{E}_{p-1}$ such that
$$
d\big(\mathscr{C}(\tilde\phi),\mathscr{C}(\phi)\big)\le
\frac{\epsilon_{p-1}}{3}\;, 
$$
and
$$
\sup_{z\in D_{R}}\big|\tilde\phi_{0}(z)-\phi_{0}(z)\big|+
\sup_{z\in D_{R}}\big|\tilde\phi_{1}(z)-\phi_{1}(z)\big|\le \epsilon_{p-1}\;.
$$
\end{enumerate}

For $p=0$, we can find  $\Gamma>0$ and  $V>0$ depending on $K$ such
that for any $K>0$ large enough, the above assumption is satisfied.  

We now prove that we can choose the constants $K$, $\gamma$, $\Gamma$,
$V$, such that if the assumption is true at step $p\ge0$, it will
also be true at step $p+1$.

We impose the lower bound
$$
\gamma>5+2\log_{2}(R+1/2)-16\log_{2}(\tilde\rho_{0}/2)\;.
$$
This implies for any $p\ge0$
$$
\gamma>5+2\log_{2}(R+1/2)-16\log_{2}(\tilde\rho_{p})\;.
$$

To each element  $\phi^{0}\in\mathscr{E}_{p}$, we apply Proposition
\ref{propit} with $\epsilon_{0}$ replaced by $\epsilon_{p}$ and 
$\epsilon_{1}$ replaced by $\epsilon_{p+1}$.

We obtain a finite subset $\mathscr{X}_{p}$ of
$\mathcal{K}_{R}(\tilde\rho_{p+1},\rho_{p+1},Q_{p+1})$
with cardinality
$$
2^{2^{-4}(\log\epsilon_{p+1}-\log\epsilon_{p})\log\epsilon_{p}
+ V(\log_{2}\epsilon_{p})^{2}}\;.
$$ 
Assume $\phi\neq\phi'\in  \mathscr{X}_{p}$ come from the
application of  Proposition \ref{propit}  to the same
$\phi^{0}\in\mathscr{E}_{p}$, then
$$
d\big(\mathscr{C}(\phi),\mathscr{C}(\phi')\big)>\epsilon_{p+1}\;.
$$ 
If they come from $\phi^{0}\in\mathscr{E}_{p}$ and $\tilde
\phi^{0}\in\mathscr{E}_{p}$ respectively with $\phi^{0}\neq \tilde 
\phi^{0}$, we have from the triangle inequality and the recursive
assumption (and $K$ large enough)
$$
d\big(\mathscr{C}(\phi),\mathscr{C}(\phi')\big)
$$
$$
\ge
d\big(\mathscr{C}(\phi^{0}),\mathscr{C}(\tilde\phi^{0})\big)
-d\big(\mathscr{C}(\phi),\mathscr{C}(\phi^{0})\big)
-d\big(\mathscr{C}(\phi'),\mathscr{C}(\tilde\phi^{0})\big)
$$
$$
\ge \epsilon_{p}-2\frac{\epsilon_{p}}{3}\ge \epsilon_{p+1}\;.
$$
Therefore, for each $\phi\neq\phi'\in  \mathscr{X}_{p}$ we have
$$
d\big(\mathscr{C}(\phi),\mathscr{C}(\phi')\big)>\epsilon_{p+1}\;,
$$
which is the third part of the recurrence assumption. 

Since the number of programs of length $\Gamma
(\log_{2}\epsilon)^{2}$ is at most $2^{\Gamma
  (\log_{2}\epsilon)^{2}}$, there is  a subset $\mathscr{E}_{p+1}$ of 
$\mathscr{X}_{p}$ with cardinality at least 
$$
2^{2^{-8}(\log\epsilon_{p+1}-\log\epsilon_{p})\log\epsilon_{p}
+ V(\log_{2}\epsilon_{p})^{2}}-\sum_{\ell=0}^{p+1}
2^{\Gamma (\log_{2}\epsilon_{\ell})^{2}}
$$
$$
\ge 
2^{2^{-8}(\log\epsilon_{p+1}-\log\epsilon_{p})\log\epsilon_{p}
+ V(\log_{2}\epsilon_{p})^{2}}-(p+2) \;2^{\Gamma (\log_{2}\epsilon_{p+1})^{2}}
$$
such that for all $0\le j\le p+1$
$$
\Delta_{\epsilon_{j}}\big(\mathscr{C}(\phi)\big)>
\Gamma (\log_{2}\epsilon_{j})^{2} \;.
$$
We now choose the numbers $\gamma$, $\Gamma$ and $V$ such that for any $p\ge0$
$$
2^{2^{-8}(\log\epsilon_{p+1}-\log\epsilon_{p})\log\epsilon_{p}
+ V(\log_{2}\epsilon_{p})^{2}}> 2^{V(\log_{2}\epsilon_{p+1})^{2}}+
(p+2) \;2^{\Gamma (\log_{2}\epsilon_{p+1})^{2}}\;.
$$
This will prove the first and second parts of the recursive assumption.
Observing that $\epsilon_{p+1}=\epsilon_{p}^{\gamma}$,
it is enough to have the above inequality 
to ensure that together with $K$ large enough, we have
$$
2^{-8}(\gamma-1)(\log\epsilon_{p})^{2}>
2\;\gamma^{2}\big(V+\Gamma)(\log\epsilon_{p})^{2}\;.
$$
For any given $\gamma>1$ this can be satisfied by taking for example
$$
0<V=\Gamma<\frac{2^{-8}(\gamma-1)}{5\;\gamma^{2}}\;.
$$
The last part of the recursive assumption follows directly from
Proposition \ref{propit}.  

It is easy to verify that the sequence of sets
$\mathcal{K}_{R}(\tilde\rho_{p},\rho_{p},Q_{p})$ is
increasing. Moreover, these are closed  sets for the sup
norm on analytic functions in $D_{R}$, and compact for the sup
norm on analytic functions  in $D_{R'}$ for any
$R'<R$ (by Montel's Theorem). 

Let $\mathscr{E}$ be the set of accumulation points of sequences in 
$(\mathscr{E}_{p})$. It is easy to verify from the last part of the
recursion assumption, that for $\gamma$ large enough, 
if $\phi\in \mathscr{E}$, for any $p\ge0$,
there exists $\phi^{(p)}\in \mathscr{E}_{p}$ such that
$$
d\big(\mathscr{C}(\phi^{(p)}),\mathscr{C}(\phi)\big)\le
\frac{1}{3}
\sum_{\ell=p}^{\infty}\epsilon_{\ell}\le \frac{\epsilon_{p}}{2}\;.
$$
Let $\phi\in \mathscr{E}$,  and $0<\epsilon<\epsilon_{2}$, there is a
unique $p$ such that 
$$
\frac{\epsilon_{p+1}}{2}<\epsilon\le\frac{\epsilon_{p}}{2}\;.
$$
Let $\tilde\phi\in \mathscr{E}_{p}$ be such that
$$
d\big(\mathscr{C}(\tilde\phi),\mathscr{C}(\phi)\big)\le 
\frac{\epsilon_{p}}{2} 
$$
It follows at once from the definition of the $\epsilon$-distortion
complexity that
$$
\Delta_{\epsilon_{p}/2}\big(\mathcal{C}(\phi)\big)\ge 
\Delta_{\epsilon_{p}}\big(\mathcal{C}(\tilde\phi)\big)\ge
\Gamma (\log\epsilon_{p})^{2}\ge \Gamma \; \gamma^{-2}\;
 (\log\epsilon)^{2}\;.
$$
Since this holds for any $\epsilon>0$, the theorem is proved. 
\qed\end{proof}

We observe that from the proof of the previous Theorem one can derive
a lower bound on the $\epsilon$-entropy (see \cite{kt}, or in other
words a lower bound on the combinatorial complexity (see \cite{K0}),
namely how many Cantor sets generated by analytic IHIFS are there which
differ at precision $\epsilon$.

\section{An upper bound for dynamical systems.}\label{secdyn}

In this section we prove  an  upper bound in the case of uniformly hyperbolic
diffeomeorphisms of compact subsets of $\real^{d}$.
\begin{theorem}\label{thm:sysdyn}
Let $f$ be a $C^{k}$ ($k\ge 1$) diffeomorphism of a compact subset $M$
of $\real^{d}$ ($d\ge 2$).
Let $\mathcal{A}$ denote an attractor of $f$. Assume  that on a
neighborhood $\mathscr{V}$ of $\mathcal{A}$ the diffeomorphism $f$ is
uniformly hyperbolic (see \cite{gh}) and denote by 
$\lambda_{-}<0<\lambda_{+}$ two numbers such that 
$$
\sup_{x\in \mathscr{V}}\big\|D_{x}f_{}\big\|\le e^{\lambda_{+}}\;,
$$
and for any $y\in \mathscr{V}$, 
$$
d\big(f(y),\mathscr{A})\le e^{\lambda_{-}}d\big(y,\mathscr{A})\;.
$$
Then for $\epsilon$ small enough, 
the $\epsilon$-distortion complexity of $\mathcal{A}$  is bounded
by
$$
\Delta_{\epsilon}(\mathcal{A})\le 
\mathcal{O}(1)\; \epsilon^{-d_{\mathcal{A}}
  (\lambda_{+}-\lambda_{-}/k)/(\lambda_{+}-\lambda_{-}) }\;
\big(\log\epsilon^{-1}\big)^{2}\;,
$$
where $d_{\mathcal{A}}$ denotes the box counting dimension of $\mathcal{A}$.
\end{theorem}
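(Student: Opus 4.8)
The plan is to exhibit, for small $\epsilon$, a short program whose output is an $\epsilon$-net of $\mathcal{A}$, the program consisting of a cheap ``seed'' configuration together with a $C^{k}$-economical encoding of $f$, after which the dynamics itself performs the refinement. Two mechanisms drive the bound. The uniform contraction towards $\mathcal{A}$ (rate $\lambda_{-}$) sharpens the \emph{transverse} precision under forward iteration, and the $C^{k}$ regularity of $f$ lets me encode the map with far fewer bits than equation~\eqref{bornetriviale} would spend listing points. The whole scheme is the hyperbolic analogue of Theorem~\ref{thm:ckupper}, with the single contraction rate there replaced by the pair $(\lambda_{+},\lambda_{-})$, and it is the competition of these two rates against the dimension $d_{\mathcal{A}}$ and the smoothness $k$ that should produce the exponent $d_{\mathcal{A}}(\lambda_{+}-\lambda_{-}/k)/(\lambda_{+}-\lambda_{-})$.

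Concretely I would first fix a trapping neighborhood with $\overline{f(\mathscr{V})}\subset\mathscr{V}$, so that $\mathcal{A}=\bigcap_{n\ge0}f^{n}(\mathscr{V})$ and, by the hypothesis $d(f(y),\mathcal{A})\le e^{\lambda_{-}}d(y,\mathcal{A})$, the transverse thickness of $f^{n}(\mathscr{V})$ about $\mathcal{A}$ is $\mathcal{O}(e^{n\lambda_{-}})$. Taking $T\approx|\log\epsilon|/|\lambda_{-}|$ then forces every point of $f^{T}(\mathscr{V})$ to lie within $\epsilon$ of $\mathcal{A}$, so it is enough to output an $\epsilon$-net of $f^{T}(\mathscr{V})$. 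The content is therefore the approximation of $f$: on a neighborhood of $\mathcal{A}$ I replace $f$ by its order-$k$ Taylor polynomials on the balls of a cover at scale $h$, where the $C^{k}$ bound gives a pointwise error $\mathcal{O}(h^{k})$ and, by the definition of the box-counting dimension, the number of balls is $\mathcal{O}(h^{-d_{\mathcal{A}}})$, each polynomial costing $\mathcal{O}(\log\epsilon^{-1})$ bits.

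For the refinement I would proceed hierarchically rather than by a single long orbit. One application of $f$ contracts the transverse (stable) directions, thereby resolving one more level of the Cantor-type structure of $\mathcal{A}$, while it expands the tangential (unstable) directions; at each level I restore the tangential resolution by interpolating new net points \emph{along the smooth unstable leaves}, using the stored Taylor data to place them to the current accuracy. Running this for the $T\approx|\log\epsilon|/|\lambda_{-}|$ levels needed to reach transverse scale $\epsilon$ produces $\mathcal{O}(\epsilon^{-d_{\mathcal{A}}})$ points but a short program. Summing the $C^{k}$-encoding cost $\sum_{j}h_{j}^{-d_{\mathcal{A}}}$ over the levels, with the scale $h_{j}$ at level $j$ dictated by the precision that survives the remaining $\lambda_{\pm}$-weighted iterations, is a geometric sum whose dominant term I expect to balance to $\epsilon^{-d_{\mathcal{A}}(\lambda_{+}-\lambda_{-}/k)/(\lambda_{+}-\lambda_{-})}$; the extra $(\log\epsilon^{-1})^{2}$ is the product of the number of levels $T=\mathcal{O}(\log\epsilon^{-1})$ with the $\mathcal{O}(\log\epsilon^{-1})$ bits per Taylor coefficient.

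The hard part will be the expanding direction. If one simply composes the approximate map along orbits, a tangential error committed at step $j$ is magnified by $e^{(T-j)\lambda_{+}}$, and the uniform requirement $h^{k}\lesssim\epsilon\,e^{-T\lambda_{+}}$ that results would only yield the weaker exponent $d_{\mathcal{A}}(\lambda_{+}-\lambda_{-})/(k|\lambda_{-}|)$. The whole point is that, by placing points through \emph{fresh local Taylor evaluation on the unstable leaves} at each level instead of propagating a single pseudo-orbit, the tangential errors are reset rather than compounded, so that only the transverse error—controlled by the contraction—needs to survive all $T$ iterations. Making this precise requires a stable/unstable cone field from uniform hyperbolicity to split the error recursion into a damped transverse part and a per-level tangential part, together with the $C^{k}$ regularity of the unstable leaves to justify the interpolation. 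Establishing that splitting, and reading off the stated exponent from the resulting level-by-level count, is the crux of the argument.
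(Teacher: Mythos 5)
Your proposal stalls exactly where you admit it does, and the admitted gap is fatal rather than technical. The ``fresh local Taylor evaluation on the unstable leaves'' that is supposed to reset tangential errors is essentially circular: for a uniformly hyperbolic attractor the local unstable leaves \emph{are} the fine structure of $\mathcal{A}$ (the attractor is a union of unstable manifolds), so interpolating new net points ``along the leaves'' at level $j$ presupposes a description of those leaves to the current accuracy --- which is precisely the kind of data whose bit-cost the theorem is bounding. The cost of storing or propagating the $C^{k}$ jets of the leaves, and of controlling how errors in those jets evolve under the approximate dynamics, appears nowhere in your sum $\sum_{j}h_{j}^{-d_{\mathcal{A}}}$; and that sum is never actually evaluated, you only ``expect'' it to balance to the stated exponent. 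In fact the accounting is inconsistent: if tangential errors really were reset for free, only the damped transverse error would constrain $h_{j}$, giving $h_{j}^{k}\approx\epsilon\,e^{(T-j)|\lambda_{-}|}$, a geometric sum dominated by its last term $h_{T}^{-d_{\mathcal{A}}}\approx\epsilon^{-d_{\mathcal{A}}/k}$ --- an exponent strictly \emph{better} than the one in the theorem when $k>1$. That your heuristic overshoots the target is a clear sign that the uncounted leaf-maintenance cost is the dominant term, and nothing in the proposal controls it.

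The paper sidesteps the compounding problem entirely, in a much simpler way: it never composes approximations of $f$ along an orbit. Instead it approximates the single map $f^{m}$, with $m=\mathcal{O}(\log\epsilon^{-1})$, by storing as program constants (to precision of order $\epsilon$) the Taylor data $f^{m}(x),Df^{m}(x),\dots,D^{k-1}f^{m}(x)$ at the centers $x$ of a cover of $\mathcal{A}$ by balls of radius $\epsilon^{\delta}$; the program then pushes a regular lattice of mesh $\epsilon e^{-m\lambda_{+}}$ (cheap to describe, since it is regular) through these stored polynomials. Expansion enters only through the Taylor remainder, via $\|D^{k}f^{m}\|\lesssim m^{k}e^{mk\lambda_{+}}$, and contraction through the requirement that $f^{m}$ bring the $\epsilon^{\delta}$-neighborhood of $\mathcal{A}$ within $\epsilon$ of $\mathcal{A}$. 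The two conditions $e^{mk\lambda_{+}}\epsilon^{k\delta}<\epsilon$ and $e^{m\lambda_{-}}\epsilon^{\delta}<\epsilon$ are compatible exactly when $\delta\ge(\lambda_{+}-\lambda_{-}/k)/(\lambda_{+}-\lambda_{-})$, which is where the exponent comes from, and the program length is (number of centers)$\,\times\,$(bits per center) $=\mathcal{O}\big(\epsilon^{-\delta d_{\mathcal{A}}}(\log\epsilon^{-1})^{2}\big)$; density of the output in $\mathcal{A}$ follows by pulling any $z\in\mathcal{A}$ back by $f^{-m}$ and using the lattice fineness. So the correct repair of your ``hard part'' is not a stable/unstable splitting of an error recursion along a long pseudo-orbit, but the observation that the derivatives of the iterate $f^{m}$ can be stored as data, so that no error is ever propagated through the dynamics at all.
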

Note that if we take $\lambda_{-}=0$ we recover the trivial upper
bound \eqref{bornetriviale}, and similarly for $k=1$. 
 The number $\lambda_{-}<0$ can be easily related to the linear expansion
and contraction factors of the unstable and stable linear bundles (see
\cite{gh}).  
\begin{proof}
We give a sketch of the proof. Let $\delta>0$ to be chosen later on. 
For $\epsilon>0$ small enough, we need of the order of
$\epsilon^{-\delta d_{\mathcal{A}}}$ balls of radius $\epsilon^{\delta}$ to 
cover the attractor $\mathcal{A}$. We make a choice of such a covering
and denote by $\mathcal{C}$ the sets of centers of the balls.

Let $m$ be an integer to be chosen later on. For any point $x$ we have
$$
f^{m}(x+y)=f^{m}(x)+Df^{m}(x)(y)+D^{2}f^{m}(x)(y,y)/2+\ldots
$$
$$
+
D^{k-1}f^{m}(x)(y,\ldots,y)/(k-1)!+\mathcal{O}\big(\|y\|^{k}\;\big\|
D^{k}f^{m}\big\| \big)\;.
$$
By the chain rule $\big\|D^{k}f^{m}\big\|$ is at most of order
  $m^{k}\exp(m\,k\,\lambda_{+})$. We   impose the two conditions 
$$
e^{m\,k\,\lambda_{+}}\epsilon^{k\delta}<\epsilon\;,\quad\mathrm{and}\quad
e^{m\,\lambda_{-}}\epsilon^{\delta}<\epsilon\;.
$$
Eliminating $m$ between the two conditions, one gets
$$
\delta\ge\frac{\lambda_{+}-\lambda_{-}/k}{
\lambda_{+}-\lambda_{-}} \;,
$$
and we will use this minimal value of $\delta$. Note that we can take
$m=\mathcal{O}(\log\epsilon^{-1})$. 

We need to describe for each $x\in\mathcal{C}$ the quantities 
 $f^{m}(x)$, $Df^{m}(x)$ etc. up to
$D^{k-1}f^{m}(x)$,  at a precision of order $\epsilon$.
This gives a complexity bounded above by 
$\Oun \,m\,\log(exp(m\,k\,\lambda_{+}) \epsilon^{-1}m^{k})$.

Finally, in order to describe the attractor, we can start with a
regular lattice $\mathscr{L}_{m,\epsilon}$ with lattice size
$\epsilon\exp(-m\,\lambda_{+})$. Describing this lattice requires only
a complexity of at most $-\Oun \log(\epsilon\exp(-m\,\lambda_{+}))$ since
this is a regular lattice. 

For each $x\in \mathcal{C}$ we consider the set  
$f^{m}(\mathscr{L}_{m,\epsilon}\cap B_{\epsilon^{\delta}}(x))$.  All
the points in this set are within distance $\epsilon$ of $\mathcal{A}$
since $\epsilon^{\delta}e^{m\lambda_{-}}\le \epsilon$.

Moreover, for any $z\in \mathcal{A}$, $f^{-m}(z)\in \mathcal{A}$  and
therefore there is an $x\in \mathcal{C}$ such that
$$
d\big(f^{-m}(z),x\big)\le\epsilon^{\delta}\;.
$$
Therefore, there exists a point $\tilde y\in
\mathscr{L}_{m,\epsilon}\cap B_{\epsilon^{\delta}}(x)$ such that
$$
d\big(f^{-m}(z),\tilde y\big)\le \epsilon\exp(-m\,\lambda_{+})
$$
implying 
$$
d\big(z,f^{m}(\tilde y)\big)\le \epsilon\;.
$$
Therefore
$$
d\left(\mathcal{A},\cup_{x\in\mathcal{C}}
\big(\mathscr{L}_{m,\epsilon}\cap
B_{\epsilon^{\delta}}(x)\big)\right)\le \epsilon\;,
$$ 
and the result follows.
\qed\end{proof}

\section{Remarks and open questions.}

In this section we state some open problems which naturally arise from
the previous results. 

\subsection{Some questions about Cantor sets.}

In the proof of Theorem \ref{laborneinf}, in order to prove that there
exists a large enough collection of Cantor sets satisfying the lower
bound, we constructed many polynomials. These polynomials are of
course analytic and even entire functions, but the whole collection
cannot be considered from the point of view of Theorem \ref{thm:pol}
because their degree depends on $\epsilon$ (it is of order $\log
\epsilon^{-1}$). This raises the question of understanding better
this construction. One can try to use instead of the Lagrange
interpolation formula the Carleson interpolation formula (see \cite{Garnett}
for example). I expect this may improve the constant in front of the
$(\log \epsilon^{-1})^{2}$ but a more interesting question would be
to understand where  data like the dimension appear in the asymptotic
behavior of the $\epsilon$-distortion complexity when $\epsilon$
tends to zero (in the prefactor?).

In another direction, one may ask how to fill the gap between the
$\log\epsilon^{-1}$ in Theorem  \ref{thm:pol} and the $(\log
\epsilon^{-1})^{2}$ in Theorems \ref{thm:analytic} and
\ref{laborneinf}. A natural candidate would be to look at entire
functions of various order (see for example
\cite{Levin} for definition and results). Between the $(\log
\epsilon^{-1})^{2}$ behavior for analytic functions and the behavior
as a power of $\epsilon^{-1}$ for $C^{k}$ functions (Theorems
\ref{thm:analytic}, \ref{laborneinf} and \ref{thm:ckupper}),
  one can try to
fill the gap by looking at quasi-analytic functions (see for example
\cite{Levin} for definition and results).

The proof of Theorem \ref{thm:ckupper} in \cite{bcc} was based on the
use of the scaling function for fractal sets. Is it possible to give a
proof based on direct interpolation as we did for Theorem \ref{laborneinf}?

Some of the results in \cite{bcc} can probably be extended to Cantor
sets in higher dimension. The question of the complexity of measures
is essentially untouched as far as I know. Is it related to other
quantities like dimension and capacity? (see for example
\cite{kahanerand} for definitions and properties).

Note also that it is not clear if we can get generic (or prevalent)
results (see \cite{kahane} for definitions), these would have to be
formulated in the Hausdorff metric of the set of Cantor sets. 
We do  not expect  to have such properties from the point of view of
the set of IHIFS. More precisely, 
 it seems possible for example that the set of
real analytic IHIFS leading to a Cantor set with  
$\epsilon$-distortion complexity bounded above by $\Oun (\log
\epsilon^{-1})^{2-\sigma}$ for some $\sigma>0$ for any $\epsilon$
small enough is of second category.  

\subsection{Some questions about dynamical systems.}

The estimate in Theorem \ref{thm:sysdyn} can be easily extended to Riemannian
manifolds, but is of course very rough and one would like to 
use Lyapunov exponents instead of uniform bounds. 
The result should then involve invariant
measures. We formulate a conjecture in this direction.

%\begin{conjecture}
\noindent\textsl{Conjecture. 
Let $\mu$ be an SRB measure for a $C^{k}$ diffeomorphism $f$ of a
compact surface with Lyapunov exponents $\lambda_{-}<0<\lambda_{+}$,
and dimension $d_{\mu}$. 
Then  
$$
\limsup_{\epsilon\to 0} \frac{ \log 
\Delta_{\epsilon}(\mu)}{\log \epsilon^{-1}}\le 
d_{\mathcal{\mu}}\;
 \frac{\log\lambda_{+}-\frac{\displaystyle
     \log\lambda_{-}}{\displaystyle k}}{
\log\lambda_{+}-\log\lambda_{-} }\;.
$$
}
%\end{conjecture}
We also conjecture that the above bound is saturated for many
diffeomorphims (in an adequate sense).

%\bibitem{}

\end{document}